\newcommand{\excise}[1]{}%{$\star$\textsc{#1}$\star$}
\newtheorem{thm}{Theorem}[section]
\newtheorem{lemma}[thm]{Lemma}
\newtheorem{cor}[thm]{Corollary}
\newtheorem{prop}[thm]{Proposition}
\newtheorem{prob}[thm]{Problem}
\theoremstyle{definition}
\newtheorem{example}[thm]{Example}
\newtheorem{remark}[thm]{Remark}
\newtheorem{defn}[thm]{Definition}
\numberwithin{equation}{section}
\newcommand{\ring}[1]{\ensuremath{\mathbb{#1}}}
\renewcommand\>{\rangle}
\newcommand\<{\langle}
\newcommand\NN{\ring{N}}
\newcommand\kk{\Bbbk}
\newcommand\mm{{\mathfrak m}}
\newcommand\cB{{\mathcal B}}
\newcommand\app{\mathord\approx}
\newcommand\iso{\cong}
\newcommand\nil{\infty}
\newcommand\til{\mathord\sim}
\newcommand\too{\longrightarrow}
\renewcommand\implies{\Rightarrow}
\def\ol#1{{\overline {#1}}}
\DeclareMathOperator\Hom{Hom} % Hom
\DeclareMathOperator\image{Im} % Hom
\DeclareMathOperator\ann{ann} % Annihilator
\def\QMod{$Q$-$\mathrm{Mod}$}
\begin{document}%%%%%%%%%%%%%%%%%%%%%%%%%%%%%%%%%%%%%%%%%%%%%%%%%%%%%%%%
%%%%%%%%%%%%%%%%%%%%%%%%%%%%%%%%%%%%%%%%%%%%%%%%%%%%%%%%%%%%%%%%%%%%%%%%

\mbox{}
%\vspace{-2ex}%-1.1743pt}
\title{Mesoprimary decomposition of binomial submodules\qquad}
\author{Christopher O'Neill}
\address{Mathematics Department\\Texas A\&M University\\College Station, TX 77840}
\email{www.math.tamu.edu/\~{\hspace{-.3ex}}coneill}

% \date{27 February 2013}

\begin{abstract}
\hspace{-2.05032pt}
Recent results of Kahle and Miller give a method of constructing primary decompositions of binomial ideals by first constructing ``mesoprimary decompositions'' determined by their underlying monoid congruences.  These mesoprimary decompositions are highly combinatorial in nature, and are designed to parallel standard primary decomposition over Noetherean rings.  In this paper, we generalize mesoprimary decomposition from binomial ideals to ``binomial submodules'' of certain graded modules over the corresponding monoid algebra, analogous to the way primary decomposition of ideals over a Noetherean ring $R$ generalizes to $R$-modules.  The result is a combinatorial method of constructing primary decompositions that, when restricting to the special case of binomial ideals, coincides with the method introduced by Kahle and Miller.  
% We first generalize mesoprimary decomposition of monoid congruences to congruences on monoid modules.  Then, use this extended framework to generalize mesoprimary decomposition of binomial ideals to a family of graded modules, .  
% Additionally, we define a category of graded modules over the corresponding monoid algebra with a notion of ``binomial submodule'' to which this notion of mesoprimary decomposition lifts.  
\end{abstract}
\maketitle

% \setcounter{tocdepth}{1}
% \tableofcontents

\vspace{-0.1in}

%%%%%%%%%%%%%%%%%%%%%%%%%%%%%%%%%%%%%%%%%%%%%%%%%%%%%%%%%%%%%%%%%%%%%%%%%
\section{Introduction}\label{s:intro}%%%%%%%%%%%%%%%%%%%%%%%%%%%%%%%%%%%%
%raggedbottom%%%%%%%%%%%%%%%%%%%%%%%%%%%%%%%%%%%%%%%%%%%%%%%%%%%%%%%%%%%%

Fix a field $\kk$ and a commutative monoid $Q$.  A \emph{binomial ideal} in the monoid algebra $\kk[Q]$ is an ideal $I$ whose generators have at most two terms.  The quotient $\kk[Q]/I$ by a binomial ideal identifies, up to scalar multiple, any monomials appearing in the same binomial in $I$.  This induces a \emph{congruence} $\til_I$ on the monoid $Q$ (an equivalence relation perserving additivity), and the quotient module $\kk[Q]/I$ is naturally graded with a decomposition into 1-dimensional $\kk$-vector spaces, at most one per $\til_I$-class.  In \cite{kmmeso}, Kahle and Miller introduce \emph{mesoprimary decompositions}, which are combinatorial approximations of primary decompositions of $I$ constructed from the congruence~$\til_I$.  

Mesoprimary decomposition of binomial ideals is motivated by combinatorially constructed primary decompositions of monomial ideals.  Any monomial ideal $I$ in the monoid algebra $\kk[Q]$ is uniquely determined by the monomials it contains.  Taking the quotient $\kk[Q]/I$ amounts to setting these monomials to 0, and the monomials that lie outside of $I$ naturally grade the quotient $\kk[Q]/I$ with a decomposition into 1-dimensional $\kk$-vector spaces.  

An irreducible decomposition for a monomial ideal $I$ whose components are themselves monomial ideals can be constructed by locating \emph{witness monomials} ${\bf x}^w$ whose annihilator modulo $I$ is prime, and then constructing for each the primary monomial ideal that contains of all monomials not lying below ${\bf x}^w$.  The intersection of these ideals (one per witness monomial) equals $I$, and the witnesses are readily identified from the grading on $\kk[Q]/I$.  See~\cite[Chapter~5]{cca} for a full treatment of monomial irreducible decomposition.  

Combinatorially constructed irreducible decompositions of monomial ideals have also been shown to live within a larger categorical setting.  Much in the way primary decomposition of ideals over a Noetherean ring $R$ generalizes to $R$-modules, combinatorial methods for constructing primary decompositions of monomial ideals can be generalized to certain modules whose gradings resemble the fine gradings of monomial quotients.  See \cite{Mil02} for an overview of these constructions and \cite{hmcech,Mil00} for consequences.  

Kahle and Miller use congruences to extend the above construction from monomial ideals to binomial ideals \cite{kmmeso}.  Given a binomial ideal $I$, they pinpoint a collection of monomials in $\kk[Q]/I$ that behave like witnesses.  For each witness ${\bf x}^w$, they construct the \emph{coprincipal component} at ${\bf x}^w$, a binomial ideal containing $I$ whose quotient has ${\bf x}^w$ as the unique greatest nonzero monomial.  The resulting collection of ideals, one for each witness, decomposes $I$, and each component admits a canonical primary decomposition.  In this way, mesoprimary decompositions act as a bridge to primary components of a binomial ideal from the combinatorics of its induced congruence.  

Mesoprimary decompositions are constructed in two settings: first for monoid congruences, and then for binomial ideals; both are designed to parallel standard primary decomposition in a Noetherian ring $R$.  This motivated Kahle and Miller to pose Problems~\ref{pr:kmqmod} and~\ref{pr:kmbinmod} below, which appeared as~\cite[Problem~17.11]{kmmeso} and~\cite[Problem~17.13]{kmmeso}, respectively.  These problems, in turn, serve to motivate the results in this paper.  

\begin{prob}\label{pr:kmqmod}
Generalize mesoprimary decomposition of monoid congruences to congruences on monoid modules.  
\end{prob}

\begin{prob}\label{pr:kmbinmod}
Develop a notion of binomial module over a commutative monoid algebra, and generalize mesoprimary decomposition of binomial ideals to this setting.  
\end{prob}

One of the largest tasks in generalizing the results of \cite{kmmeso} to monoid modules is to separate which constructions should happen in the monoid and which should happen in the module, since these coincide for monoid congruences.  See Remarks~\ref{r:cancellative} and~\ref{r:associatedcong} for specific instances of this distinction.  

In the first part of this paper (Sections~\ref{s:qmod}-\ref{s:qmodmesodecomp}), we introduce the category \QMod~of modules over a monoid $Q$ (Definition~\ref{d:qmodule}) and generalize nearly every result from \cite{kmmeso} on monoid congruences to congruences on monoid modules.  We define primary and mesoprimary monoid module congruences (Definition~\ref{d:qmodprimary}) and give equivalent conditions for these congruences in terms of associated objects and witnesses (Theorems~\ref{t:qmodprimary} and~\ref{t:qmodmesoprimary}).  We then construct a mesoprimary decomposition, with one component per key witnesses, for any monoid module congruence (Theorem~\ref{t:qmodmesodecomp}).  The resulting theory completely answers Problem~\ref{pr:kmqmod}.  

The second part of this paper (Sections~\ref{s:tightlygraded}-\ref{s:binmesodecomp}) answers Problem~\ref{pr:kmbinmod}.  We introduce the category $\cB_Q$ (Definition~\ref{d:bmod}), whose objects are tightly graded modules (Definition~\ref{d:graded}) over the monoid algebra $\kk[Q]$ graded by monoid modules in \QMod.  It is in this setting that we define binomial submodules (Definition~\ref{d:binomialsubmod}).  We define mesoprimary submodules (Definition~\ref{d:binmesoprimary}) and associated mesoprime ideals (Definition~\ref{d:assocmesoprime}), developing a theory of mesoprimary decomposition (Theorem~\ref{t:binmesodecomp}) that parallels results in \cite{kmmeso}.  In~particular, the binomial submodules of the free module $\kk[Q]$ are precisely the binomial ideals; see Example~\ref{e:mesoprimaryideal}.  

To conclude the paper, we demonstrate in Section~\ref{s:binprimarydecomp} how a binomial primary decomposition may be recovered from a mesoprimary decomposition when the underlying field is algebraically closed.  

% Every definition given here generalizes the existing notion for monoid congruences, and 

\subsection*{Notation}

Throughout this paper, assume $Q$ is a Noetherian commutative monoid and $\kk$ is an arbitrary field, and let $\kk[Q]$ denote the monoid algebra over $Q$ with coefficients in $\kk$.  Unless otherwise stated, all $\kk[Q]$-modules are assumed to be finitely generated.  

%%%%%%%%%%%%%%%%%%%%%%%%%%%%%%%%%%%%%%%%%
\section{The category of monoid modules}%
\label{s:qmod}%%%%%%%%%%%%%%%%%%%%%%%%%%%
%%%%%%%%%%%%%%%%%%%%%%%%%%%%%%%%%%%%%%%%%

In this section, we define the category \QMod~of modules over a commutative monoid $Q$ and extend some of the fundamental concepts and results from monoid ideals and congruences to the objects of this category.  The content of this section (as well as Section~\ref{s:tightlygraded}) is motivated by Example~\ref{e:modqmodule}.  First, we record some preliminary definitions (see \cite{grilletActions} for more detail).  

\begin{defn}\label{d:qmodule}
Fix a commutative monoid $Q$.  
\begin{enumerate}
\item 
A \emph{$Q$-module} $(T,\cdot)$ is a set $T$ together with a left action by $Q$ that satisfies $0 \cdot t = t$ and $(q + q')\cdot t = q\cdot(q' \cdot t)$ 
for all $t \in T$, $q, q' \in Q$.  A subset $T' \subset T$ is a \emph{submodule} of $T$ if it is closed under the $Q$-action, that is, $Q \cdot T' \subset T'$.  The submodule of $T$ \emph{generated by elements} $t_1, \ldots, t_r \in T$ is $\<t_1, \ldots, t_r\> = \bigcup_{i = 1}^r Q \cdot t_i.$

\item 
A map $\psi:T \to U$ between $Q$-modules $T$ and $U$ is a \emph{$Q$-module homomorphism} if $\psi(q \cdot t) = q \cdot \psi(t)$ for all $t \in T, q \in Q$.  The set of $Q$-module homomorphisms from $T$ to $U$ is denoted by $\Hom_Q(T,U)$, and is naturally a $Q$-module with action $q \cdot \psi$ given by $(q \cdot \psi)(t) = \psi(q \cdot t)$.  

\item 
The \emph{category of $Q$-modules}, denoted \QMod, is the category whose objects are $Q$-modules and whose morphisms are $Q$-module homomorphisms.  

\end{enumerate}
\end{defn}

Direct sums, direct products, and tensor products exist in the category \QMod.  We now state their constructions explicitly.  

\begin{defn}\label{d:qsumprod}
Fix two $Q$-modules $T$ and $U$.  
\begin{enumerate}
\item 
The \emph{direct sum} $T \oplus U$ is the disjoint union $T \coprod U$ as sets, with the natural $Q$-action on each component.  

\item 
The \emph{direct product} $T \times U$ is the cartesian product of $T$ and $U$ as a set, with componentwise $Q$-action.  

\item 
The \emph{tensor product} $T \otimes_Q U$ is the collection of formal elements $t \otimes u$ for $t \in T$ and $u \in U$ modulo the equivalence relation generated by 
$$t \otimes (q \cdot u) \sim (q \cdot t) \otimes u \text{ for } t \in T \text{ and } u \in U$$
The action of $Q$ is given by $q \cdot(t \otimes u) = (q \cdot t) \otimes u$ for $q \in Q$, $t \in T$ and $u \in U$.  

\end{enumerate}
\end{defn}

\begin{defn}\label{d:congruence}
Fix a $Q$-module $T$.  A \emph{congruence on $T$} is an equivalence relation $\til$ on $T$ that satisfies $t \sim t' \Rightarrow q \cdot t \sim q \cdot t'$ for all $q \in Q$ and $t, t' \in T$.  The \emph{quotient module} $T/\til$ is the set of equivalence classes of $T$ under $\til$.  The congruence condition on $\til$ ensures that $T/\til$ has a well defined action by $Q$.  
\end{defn}

\begin{defn}\label{d:ideal}
A subset $T \subset Q$ is an \emph{ideal} if it is a $Q$-submodule of $Q$, that is, $Q + T \subset T$.  An ideal $P \subset Q$ is \emph{prime} if its complement in $Q$ is a submonoid of~$Q$.  
\end{defn}

\begin{defn}\label{d:localization}
Fix a $Q$-module $T$, a prime ideal $P \subset Q$, and set $F = Q \setminus P$.  The \emph{localization of $T$ at $P$}, denoted $T_P$, is the set $T \times F$ modulo the equivalence relation $\til$ that sets $(t,f) \sim (t',f')$ whenever $w \cdot f' \cdot t = w \cdot f \cdot t'$ for some $w \in Q$.  The localization $Q_P$ is naturally a monoid, and $T_P$ is naturally a $Q_P$-module.  Write $t-f$ to denote the element $(t,f) \in T \times F$.  
\end{defn}

\begin{remark}\label{r:inducedconglocal}
Any congruence $\til$ on a $Q$-module $T$ induces a congruence on $T_P$.  
\end{remark}

\begin{defn}\label{d:greensorder}
Fix a $Q$-module $T$.  \emph{Green's preorder} on $T$ sets $t \preceq t'$ whenever $\<t\> \supset \<t'\>$.  \emph{Green's relation} on $T$ sets $t \sim t'$ whenever $\<t\> = \<t'\>$.  
\end{defn}

Green's preorder on a monoid orders its elements by divisibility, and this notion extends to $Q$-modules.  
% Lemma~\ref{l:greensorder} is the $Q$-module analogue of \ref{l:intgreensorder}.  

\begin{lemma}\label{l:greensorder}
Green's relation $\til$ on a $Q$-module $T$ is a congruence on $T$, and the quotient $T/\til$ is partially ordered by divisibility.  
\end{lemma}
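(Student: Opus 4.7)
The plan is to verify three things in sequence: that Green's relation $\til$ is an equivalence relation, that it respects the $Q$-action (so is a congruence on $T$), and that the induced relation on $T/\til$ is antisymmetric.

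Since $\til$ is defined by the set equality $\<t\> = \<t'\>$, reflexivity, symmetry, and transitivity are all immediate; I would dispatch this observation in a single sentence.

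For the congruence property, I would fix $q \in Q$ and $t, t' \in T$ with $t \til t'$. Because $t \in \<t'\>$ and $t' \in \<t\>$, there exist $a, b \in Q$ with $t = a \cdot t'$ and $t' = b \cdot t$. Applying $q$ yields $q \cdot t = a \cdot (q \cdot t') \in \<q \cdot t'\>$, and then any element $q' \cdot (q \cdot t)$ of $\<q \cdot t\>$ equals $(q'+a) \cdot (q \cdot t') \in \<q \cdot t'\>$; the reverse inclusion follows symmetrically from $t' = b \cdot t$. This is the only step where one must bridge from equality of the full orbits $\<t\>$ and $\<t'\>$ to equality of the sub-orbits $\<q \cdot t\>$ and $\<q \cdot t'\>$, and I expect it to be the main (though still mild) obstacle.

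For the second assertion, Green's preorder $\preceq$ on $T$ is defined purely in terms of orbits, so it descends to a well-defined preorder on $T/\til$, inheriting reflexivity and transitivity at no cost. Antisymmetry is then automatic: if the classes $[t]$ and $[t']$ satisfy $[t] \preceq [t']$ and $[t'] \preceq [t]$, then $\<t\> \supseteq \<t'\> \supseteq \<t\>$, so $\<t\> = \<t'\>$ and $[t] = [t']$. This precisely realizes the claim that the quotient is partially ordered by divisibility, since the induced preorder is nothing other than containment of principal submodules.
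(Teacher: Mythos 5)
Your proof is correct and follows the same route as the paper's (much terser) argument: check that $\<t\>=\<t'\>$ implies $\<q\cdot t\>=\<q\cdot t'\>$, then observe that antisymmetry of the divisibility preorder on $T/\til$ holds because distinct classes generate distinct submodules. You simply supply the explicit verification (writing $t=a\cdot t'$, $t'=b\cdot t$ and using commutativity of $Q$) that the paper leaves implicit.
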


\begin{proof}
For $t, t' \in T$ and $q \in Q$, we can see $\<t\> = \<t'\>$ implies $\<q \cdot t\> = \<q \cdot t'\>$.  Each element of the quotient $T/\til$ generates a distinct submodule, so the divisibility preorder is antisymmetric, and thus a partial order.  
\end{proof}

We now generalize the notion of a nil element of a monoid.  

\begin{defn}\label{d:nil}
An element $\nil \in T$ in a $Q$-module $T$ is called a \emph{nil} if it is absorbing, that is, $Q \cdot \nil = \{\nil\}$.  The \emph{basin} of a nil $\nil \in T$ is the set 
$$B(\nil) = \{t \in T : qt = \nil \text{ for some } q \in Q\}$$
of elements of $T$ that can be sent to $\nil$ under the action of $Q$.  The \emph{nil set} of $T$, denoted $N(T)$, is the collection of all nil elements in $T$.  
\end{defn}

\begin{defn}\label{d:properly}
Fix a subset $U \subset T$ of a $Q$-module $T$.  A \emph{$Q$-orbit} of $U$ is a connected component of the undirected graph whose vertices are elements of $U$ and whose edges connect two vertices $s, t \in U$ whenever $q \cdot s = t$ for some $q \in Q$.  $T$ is \emph{connected} if it has at most one $Q$-orbit, and $T$ is \emph{properly connected} if $T \setminus N(T)$ has at most one $Q$-orbit.  
\end{defn}

\begin{example}\label{e:modproperly}
Let $T$ and $U$ be connected $Q$-modules with nils $\infty_T$ and $\infty_U$, respectively.  If $T \setminus \{\infty_T\}$ and $U \setminus \{\infty_U\}$ are both nonempty, the module $(T \coprod U)/\<\nil_T \sim \nil_U\>$ is connected and has a single nil, but it is not properly connected, since removing the nil produces two distinct $Q$-orbits.  
% Let $Q = \NN^2$, let $T = U = Q/\<y\>$, and consider the module 
% $T' = (T \coprod U)/\<\nil_T \sim \nil_U\>$.  
% $T$ is connected and has a single nil, but it is not properly connected, 
% since removing the nil produces two distinct orbits.  
\end{example}

\begin{remark}\label{r:nils}
Unlike a monoid, a $Q$-module may have more than one nil element.  However, by Lemma~\ref{l:nilorbit}, each $Q$-orbit can have at most one nil element.  
\end{remark}

\begin{lemma}\label{l:nilorbit}
The basin of a nil element $\nil \in T$ in a $Q$-module $T$ is the $Q$-orbit of $T$ containing $\nil$.  
\end{lemma}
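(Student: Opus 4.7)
The plan is to prove both containments between $B(\nil)$ and the $Q$-orbit of $T$ containing $\nil$. First observe that $\nil \in B(\nil)$, since $0 \cdot \nil = \nil$ by the definition of a $Q$-module, so both sets contain $\nil$ and the statement makes sense.

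For the containment $B(\nil) \subseteq$ $Q$-orbit of $\nil$, suppose $t \in B(\nil)$.  By definition there exists $q \in Q$ with $q \cdot t = \nil$, which places an edge between $t$ and $\nil$ in the orbit graph of Definition~\ref{d:properly}.  Hence $t$ lies in the same $Q$-orbit as $\nil$.

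For the reverse containment, I would induct on the length of a path in the orbit graph connecting a given element $t$ to $\nil$.  The base case $t = \nil$ is handled by the observation above.  For the inductive step, assume $t$ is adjacent to some $s$ that is already known to lie in $B(\nil)$, so fix $q' \in Q$ with $q' \cdot s = \nil$.  Since the orbit graph is undirected, either $q \cdot t = s$ or $q \cdot s = t$ for some $q \in Q$.  In the first case, $(q + q') \cdot t = q' \cdot (q \cdot t) = q' \cdot s = \nil$, so $t \in B(\nil)$.  In the second case, the absorbing property of $\nil$ gives $q' \cdot t = q' \cdot (q \cdot s) = q \cdot (q' \cdot s) = q \cdot \nil = \nil$, again yielding $t \in B(\nil)$.

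The main obstacle, albeit a mild one, is handling the case of the inductive step in which the edge from $s$ to $t$ is oriented "forward" (i.e., $q \cdot s = t$ rather than $q \cdot t = s$): the witness $q'$ for $s$ does not directly transfer to $t$.  Here the proof relies essentially on the absorbing property $Q \cdot \nil = \{\nil\}$ together with the commutativity of $Q$, which together let us reroute the computation through $\nil$.  Without absorption one could produce adjacencies in the orbit graph that fail to propagate membership in the basin.
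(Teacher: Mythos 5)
Your proof is correct and follows essentially the same route as the paper: the paper's one-line argument is precisely that the basin is clearly contained in the orbit, and that whenever $q\cdot t = s$ one has $t \in B(\nil)$ if and only if $s \in B(\nil)$, which is exactly the two-case edge-propagation you carry out by induction on path length. Your version just makes explicit the use of commutativity and the absorbing property that the paper leaves implicit.
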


\begin{proof}
The basin of $\nil$ is clearly contained in its $Q$-orbit, and whenever $qt = s$ for $q \in Q$ and $s, t \in T$,  we have $t \in B(\nil)$ if and only if $s \in B(\nil)$.  
\end{proof}

\begin{example}\label{e:modqmodule}
Let $Q = \NN^2$, $I = \<x^2,y^2\> \subset \kk[Q]$, $R = \kk[Q]/I$, and 
$$M = (R \oplus R)/\<xye_1 - xye_2\>,$$
where $e_1$ and $e_2$ generate the free $\kk[Q]$-module $R \oplus R$.  $R$ is graded by the quotient monoid $Q/\til_I$, and $M$ is graded by two disjoint copies of $Q/\til_I$ with both copies of $xy$ and the nil elements identified.  Unlike the monoid that grades $R$, this grading does not have a natural monoid structure.  It does, however, have a natural action by $Q$, correponding to the action on $M$ by monomials in $\kk[Q]$.   See Figure~\ref{fig:monoidmodule} for an illustration.  
\end{example}

\begin{figure}[tbp]
\begin{center}
\includegraphics[width=2.5in]{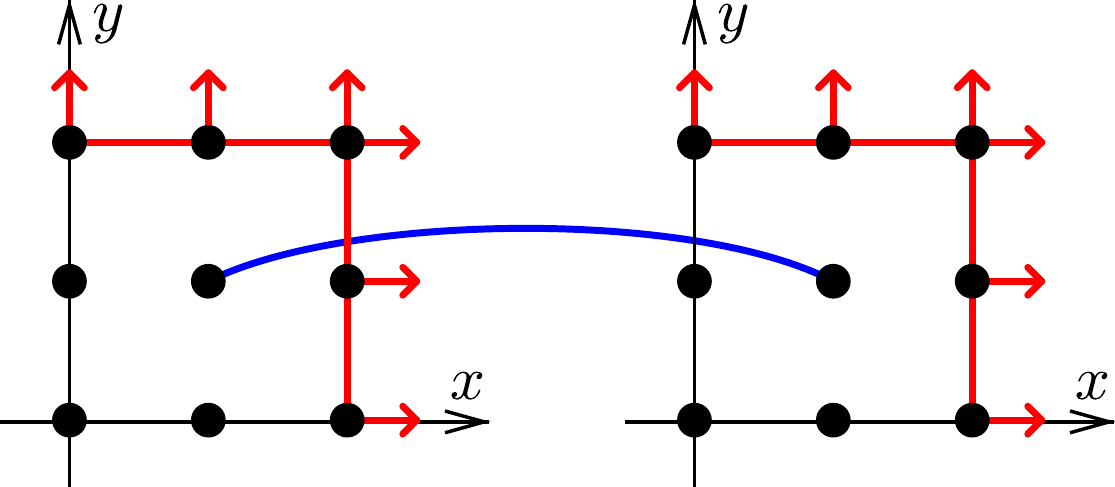}
\end{center}
\caption[A monoid module over $\NN^2$]{The monoid module that grades $M$ in Example~\ref{e:modqmodule}.}
\label{fig:monoidmodule}
\end{figure}

There is also a notion of decomposition of $Q$-modules into indecomposables.  

\begin{lemma}\label{l:orbitdecomp}
Every $Q$-module $T$ has a unique decomposition $T = \bigoplus_i T_i$ as a direct sum of connected modules.  
\end{lemma}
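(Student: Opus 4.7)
The proof will be essentially bookkeeping: the $Q$-orbits of $T$ already form the desired decomposition, and any other decomposition into connected summands must agree with it.

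First I would observe that being in the same $Q$-orbit is an equivalence relation on $T$ (this is immediate from the definition of $Q$-orbit as a connected component of the orbit graph), so the orbits partition $T$ as a set. Let $\{T_i\}_i$ be the collection of $Q$-orbits of $T$. Each $T_i$ is closed under the $Q$-action: if $t \in T_i$ and $q \in Q$, then $t$ and $q \cdot t$ are joined by an edge in the orbit graph, so $q \cdot t \in T_i$. Thus each $T_i$ is a $Q$-submodule of $T$, and by construction each is connected. Because the direct sum in \QMod (Definition~\ref{d:qsumprod}) is the disjoint union of the underlying sets equipped with the natural $Q$-action on each component, the set-theoretic partition $T = \coprod_i T_i$ is precisely the direct sum decomposition $T = \bigoplus_i T_i$ in \QMod.

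For uniqueness, suppose $T = \bigoplus_j S_j$ is any decomposition of $T$ as a direct sum of connected $Q$-modules. As a set $T$ is the disjoint union of the $S_j$, and each $S_j$ is a $Q$-submodule of $T$, so $S_j$ is a union of $Q$-orbits of $T$. But the $Q$-action on $S_j$ inherited from $T$ is the same as its intrinsic $Q$-action, so the $Q$-orbits of $S_j$ are precisely the $Q$-orbits of $T$ contained in $S_j$. Since $S_j$ is connected, it contains exactly one such orbit, and so $S_j = T_i$ for a unique index $i$. This gives the required bijection between the two decompositions.

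The only place any care is needed is in checking that connectedness is preserved under passing between a submodule and its ambient module, which is immediate because the orbit graph on $S_j \subset T$ is exactly the induced subgraph of the orbit graph on $T$ (edges from $Q \cdot S_j \subset S_j$ cannot exit $S_j$). So there is no real obstacle; the lemma is essentially the statement that the $Q$-orbit partition is the unique partition of $T$ into connected $Q$-invariant pieces.
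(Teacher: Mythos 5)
Your argument is correct and follows the same route as the paper, whose entire proof is the one-line observation that $T$ is the disjoint union of its $Q$-orbits; you have simply spelled out the details, including the uniqueness argument via the fact that any connected summand must be a single orbit. No issues.
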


\begin{proof}
Any $Q$-module is the disjoint union of its $Q$-orbits.  
\end{proof}

\begin{remark}\label{r:kernel}
Kernels, in the categorical sense, do not exist in the category \QMod.  However, there is still a notion of kernel of a $Q$-module homomorphism as a congruence; see Definition~\ref{d:kernel}.  This definition is justified by Theorem~\ref{t:isothm}, a $Q$-module analogue of the first isomorphism theorem for groups.  
\end{remark}

\begin{defn}\label{d:kernel}
Fix a homomorphism $\phi:T \to U$.  The \emph{kernel of $\phi$}, denoted $\ker(\phi)$, is the congruence $\til$ on $T$ that sets $t \sim t'$ whenever $\phi(t) = \phi(t')$ for $t, t' \in T$.  
\end{defn}

\begin{thm}\label{t:isothm}
If $\phi:T \to U$ is a $Q$-module homomorphism, then $T/\ker(\phi) \iso \image(\phi)$.  
\end{thm}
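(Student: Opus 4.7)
The plan is to mimic the classical first isomorphism theorem proof, with the main work being to check that everything is compatible with the $Q$-action rather than with a group operation. Concretely, I would define the candidate isomorphism
\[
\ol\phi : T/\ker(\phi) \too \image(\phi), \qquad \ol\phi([t]) = \phi(t),
\]
where $[t]$ denotes the $\ker(\phi)$-equivalence class of $t \in T$, and verify in sequence that $\ol\phi$ is well-defined, a $Q$-module homomorphism, injective, and surjective.

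Well-definedness is immediate from Definition~\ref{d:kernel}: if $[t] = [t']$ then $t \sim t'$ in $\ker(\phi)$, which by definition means $\phi(t) = \phi(t')$. The $Q$-equivariance is equally direct, since the $Q$-action on the quotient (Definition~\ref{d:congruence}) is defined so that $q \cdot [t] = [q \cdot t]$, and then
\[
\ol\phi(q \cdot [t]) = \ol\phi([q \cdot t]) = \phi(q \cdot t) = q \cdot \phi(t) = q \cdot \ol\phi([t]),
\]
using that $\phi$ is a $Q$-module homomorphism. Surjectivity onto $\image(\phi)$ is built into the definition of the target. Finally, injectivity is the converse of well-definedness: if $\ol\phi([t]) = \ol\phi([t'])$ then $\phi(t) = \phi(t')$, so $t \sim t'$ in $\ker(\phi)$ and $[t] = [t']$.

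I expect no real obstacle here; the only mildly delicate point is that \QMod{} does not have categorical kernels (Remark~\ref{r:kernel}), so one must be careful that $\ker(\phi)$ is being treated as a congruence on $T$ rather than as a subobject, and that $T/\ker(\phi)$ is the quotient module of Definition~\ref{d:congruence}. Once that conceptual translation is made, the argument reduces to the four routine verifications above, and each uses only the $Q$-equivariance of $\phi$ together with the defining properties of the quotient $Q$-action.
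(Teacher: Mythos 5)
Your proof is correct and takes essentially the same approach as the paper: the paper's proof is just a terser version of the same argument (well-definedness, injectivity, and surjectivity of the induced map), with the $Q$-equivariance check left implicit.
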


\begin{proof}
The homomorphism $\phi$ is surjective onto its image, and the quotient of $T$ by $\ker(\phi)$ identifies elements with the same image under $\phi$.  This ensures that the map $T/\ker(\phi) \too \image(\phi)$ is both injective and surjective.  
\end{proof}

\begin{cor}\label{c:modstruct}
Any finitely generated $Q$-module $T$ is isomorphic to a quotient of a direct sum of finitely many copies of $Q$.  
\end{cor}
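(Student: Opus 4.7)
The plan is to build a surjective $Q$-module homomorphism from a finite direct sum of copies of $Q$ onto $T$ and then invoke the first isomorphism theorem (Theorem~\ref{t:isothm}) to conclude that $T$ is isomorphic to a quotient of that direct sum. Since $T$ is finitely generated, fix generators $t_1, \ldots, t_r \in T$, so that $T = \langle t_1, \ldots, t_r\rangle = \bigcup_{i=1}^r Q \cdot t_i$ by Definition~\ref{d:qmodule}(1).

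Next I would note that the monoid $Q$ itself is a $Q$-module under its own addition, with the identity element $0 \in Q$ serving as a single generator (since $Q \cdot 0 = Q$). Writing $Q^{\oplus r}$ for the direct sum of $r$ copies of $Q$ in the category \QMod~(which by Definition~\ref{d:qsumprod}(1) is the disjoint union of $r$ copies of $Q$ with the componentwise $Q$-action), I define $\phi \colon Q^{\oplus r} \to T$ by sending an element $q$ lying in the $i^\th$ summand to $q \cdot t_i \in T$. This is a $Q$-module homomorphism: if $q' \in Q$ and $q$ lies in the $i^\th$ summand, then $\phi(q' \cdot q) = (q'+q)\cdot t_i = q' \cdot (q \cdot t_i) = q' \cdot \phi(q)$, using associativity of the action from Definition~\ref{d:qmodule}(1).

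The map $\phi$ is surjective because every element of $T$ is of the form $q \cdot t_i$ for some $i$ and some $q \in Q$, which is the image of $q$ in the $i^\th$ summand of $Q^{\oplus r}$. Applying Theorem~\ref{t:isothm} to $\phi$ then yields
\[
Q^{\oplus r} / \ker(\phi) \;\cong\; \image(\phi) \;=\; T,
\]
exhibiting $T$ as a quotient of a direct sum of finitely many copies of $Q$. There is no serious obstacle here; the only point requiring a small check is that $\phi$ respects the $Q$-action, which follows immediately from the associativity axiom for $Q$-modules, and that the disjoint-union structure of $Q^{\oplus r}$ makes the assignment $q \mapsto q \cdot t_i$ well defined on each summand independently.
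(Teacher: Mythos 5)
Your proof is correct and follows exactly the paper's own argument: map the direct sum $\bigoplus_{i=1}^r Q$ onto $T$ via $q \mapsto q \cdot t_i$ on the $i$th summand and apply Theorem~\ref{t:isothm}. The extra verifications you include (that $\phi$ respects the $Q$-action and is well defined on the disjoint union) are routine and consistent with the paper's definitions.
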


\begin{proof}
Fix a finitely generated $Q$-module $T = \<t_1, \ldots, t_r\>$.  Let $\phi:\bigoplus_{i=1}^r Q \too T$, where the map on the $i$-th summand is given by $Q \to \<t_i\>$.  This map is surjective, so Theorem~\ref{t:isothm} implies $T \iso (\bigoplus_{i=1}^r Q)/\ker(\phi)$.  
\end{proof}

%%%%%%%%%%%%%%%%%%%%%%%%%%%%%%%%%%%%%%%%%%%%%%%%%
\section{Primary and mesoprimary monoid modules}%
\label{s:qmodmesoprimary}%%%%%%%%%%%%%%%%%%%%%%%%
%%%%%%%%%%%%%%%%%%%%%%%%%%%%%%%%%%%%%%%%%%%%%%%%%

Mesoprimary decomposition of monoid congruences models primary decomposition of ideals in a Noetherian ring $R$, with mesoprimary congruences playing the role of primary ideals and prime congruences playing the role of prime ideals.  In this section, we generalize the notion of mesoprimary monoid congruences to congruences on monoid modules (Definition~\ref{d:qmodprimary}), analogous to the way primary decomposition of ideals in $R$ generalizes to finitely generated $R$-modules.  The main result is Theorem~\ref{t:qmodmesoprimary}, which generalizes \cite[Theorem~6.1]{kmmeso} and characterizes mesoprimary monoid module congruences in terms of their associated prime congruences (Definition~\ref{d:associatedcong}).

\begin{defn}\label{d:cancellative}
Fix a $Q$-module $T$.  For each $q \in Q$, let $\phi_q$ denote the map $T \stackrel{\cdot q}\too T$ given by action by $q$.  
\begin{itemize}
\item 
An element $q \in Q$ acts \emph{cancellatively on $T$} if $\phi_q$ is injective.  

\item 
An element $q \in Q$ acts \emph{nilpotently on $T$} if for each $t \in T$, $(nq) \cdot t \in N(T)$ for some nonnegative integer $n$.  

\item 
An element $t \in T$ is \emph{partly cancellative} if whenever $a \cdot t = b \cdot t \notin N(T)$ for $a, b \in Q$ that act cancellatively on $T$, the morphisms $\phi_a$ and $\phi_b$ coincide.  

\end{itemize}
\end{defn}

\begin{remark}\label{r:cancellative}
Each term in Definition~\ref{d:cancellative} is also defined in \cite[Definition~2.9]{kmmeso} for monoid elements.  However, we are forced to make a distinction between monoid elements and monoid module elements (these objects coincide in the setting of \cite{kmmeso}).  In~particular, ``cancellative'' and ``nilpotent'' (Definition~\ref{d:cancellative}) are properties of monoid elements, whereas ``partly cancellative'' is a property of monoid module elements.  Roughly speaking, ``cancellative'' and ``nilpotent'' describe how a particular $q \in Q$ acts on different module elements, whereas ``partly cancellative'' describes how different monoid elements act on a particular $t \in T$.  
\end{remark}

\begin{defn}\label{d:qmodprimary}
A $Q$-module $T$ is 
\begin{itemize}
\item 
\emph{primary} if each $q \in Q$ is either cancellative or nilpotent on $T$.  

\item 
\emph{mesoprimary} if it is primary and each $t \in T$ is partly cancellative.  

\end{itemize}
A congruence $\til$ on $T$ is \emph{primary} (respectively, \emph{mesoprimary}) if $T/\til$ is a primary (respectively, mesoprimary) $Q$-module.  
\end{defn}

\begin{lemma}\label{l:primarymonoidcase}
Fix a congruence $\til$ on $Q$.  The $Q$-module $T = Q/\til$ is (meso)primary in the sense of Definition~\ref{d:qmodprimary} if and only if $\til$ is a (meso)primary monoid congruence in the sense of \cite{kmmeso}.  
\end{lemma}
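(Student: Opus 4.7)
The plan is to verify that when $T=Q/\til$, each of the three conditions appearing in Definition~\ref{d:qmodprimary} (cancellative, nilpotent, partly cancellative) matches termwise the corresponding condition on the monoid congruence $\til$ from \cite{kmmeso}. Since the paper has already flagged in Remark~\ref{r:cancellative} that the $Q$-module definitions specialize to the Kahle-Miller definitions precisely when the module elements coincide with the monoid elements, the proof should essentially be a definition-chase.

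First, I would set up the identification: the $Q$-module $T=Q/\til$ carries a $Q$-action which, under the natural identification of its underlying set with the elements of the monoid quotient $Q/\til$, is simply multiplication in $Q/\til$. Under this identification, the map $\phi_q:T\to T$ of Definition~\ref{d:cancellative} is left multiplication by $q$ in the monoid $Q/\til$. Next, I would check that nils match: an element of $T$ is absorbing under the $Q$-action if and only if it is absorbing in the monoid $Q/\til$, so the nil set $N(T)$ agrees with the nil set of the monoid $Q/\til$. With this in place, ``cancellative'' and ``nilpotent'' become, term for term, the monoid-theoretic notions from \cite[Definition~2.9]{kmmeso}.

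For ``partly cancellative,'' I would rewrite Definition~\ref{d:cancellative} in this setting: $t\in T$ is partly cancellative exactly when, for cancellative $a,b\in Q$ with $a\cdot t=b\cdot t$ a non-nil element of $Q/\til$, the maps $\phi_a$ and $\phi_b$ agree on $T$. When $T=Q/\til$, this is the condition that $a$ and $b$ act identically on every class of $\til$, which is precisely the partly cancellative condition imposed on $\til$-classes in \cite{kmmeso}. Then $T$ primary corresponds to $\til$ primary, and $T$ mesoprimary (primary plus every $t\in T$ partly cancellative) corresponds to $\til$ mesoprimary.

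The main (and only) obstacle is bookkeeping: being careful that the distinction highlighted in Remark~\ref{r:cancellative}, namely that ``cancellative'' and ``nilpotent'' describe the action of $Q$ on $T$ while ``partly cancellative'' describes how $Q$-elements act on a given $t\in T$, causes no mismatch when one collapses both roles back to elements of $Q/\til$. Once the identification $T\leftrightarrow Q/\til$ and the coincidence of nils is established, each bullet in Definition~\ref{d:qmodprimary} translates verbatim into the corresponding bullet in the Kahle-Miller definition, giving both directions of the equivalence simultaneously.
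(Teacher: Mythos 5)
Your proposal is correct and follows essentially the same definition-chasing route as the paper: match cancellative and nilpotent actions with the corresponding monoid notions, then observe that for $T=Q/\til$ the conclusion $\phi_a=\phi_b$ of the partly cancellative condition collapses to $\ol a=\ol b$ in $Q/\til$. The only point worth making explicit (as the paper does) is that equivalence $\phi_a=\phi_b\iff\ol a=\ol b$, which follows by evaluating at the identity class in one direction and from the definition of the action in the other.
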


\begin{proof}
For $q \in Q$ let $\ol q$ denote the image of $q$ modulo $\til$.  An element $q \in Q$ acts cancellatively on $T$ if and only if its image modulo $\til$ is cancellative, and $q$ acts nilpotently on $T$ if and only if it has nilpotent image modulo $\til$.  This proves that $T$ is a primary $Q$-module if and only if $\til$ is primary as a monoid congruence.  Lastly, assuming $\til$ is $P$-primary, notice that for $a, b \notin P$, $\phi_a = \phi_b$ if and only if $\ol a = \ol b \in T$, so each $\ol q \in T$ is partly cancellative as a monoid element if and only if it is partly cancellative as an element of a $Q$-module.  This completes the proof.  
\end{proof}

We now generalize witnesses and key witnesses from \cite{kmmeso} to the setting of monoid module congruences.  Definition~\ref{d:qmodwitness}, while complex, very closely resembles \cite[Definition~4.7]{kmmeso}; see the original text for several motivating examples.  Key witnesses are used to construct the mesoprimary components (Definition~\ref{d:qmodcoprincomp}) used to decompose monoid module congruences (Theorem~\ref{t:qmodmesodecomp}).  

\begin{defn}\label{d:qmodwitness}
Let $T$ be a $Q$-module, $P \subset Q$ a prime ideal, and $\til$ a congruence on $T$.  For $t \in T$, let $\ol t$ denote the image of $t$ in $\ol T_P$, and for $p \in P$, let $\phi_p:\ol T_P \to \ol T_P$ denote the morphism given by the action of $p$.  

\begin{enumerate}

\item 
An element $w \in T$ is \emph{exclusively maximal} in a set $A \subset \ol T_P$ if $\ol w$ is the unique maximal element of $A$ under Green's preorder.  

\item 
An element $w \in T$ with non-nil image in $\ol T_P$ is a \emph{$\til$-witness} for $P$ if for each generator $p \in P$, the class of $\ol w$ is non-singleton under $\ker(\phi_p)$ and $\ol w$ is not exclusively maximal in that class.  

\item 
An element $w' \in T$ is an \emph{aide} for a $\til$-witness $w$ for $P$ and a generator $p \in P$ if $w$ and $w'$ have distinct images in $\ol T_P$ but are not distinct under $\ker(\phi_p)$.  

\item 
An element $w$  with non-nil image in $\ol T_P$ is a \emph{key $\til$-witness for $P$} if $\ol w$ is non-singleton under $\bigcap_{p \in P} \ker(\phi_p)$ and $\ol w$ is not exclusively maximal in this non-singleton class.  

\item 
The prime $P$ is \emph{associated to $T$} if $T$ has a witness for $P$, or if $P = \emptyset$ and $T$ has a $Q$-orbit with no nil.  

\end{enumerate}
\end{defn}

\begin{remark}\label{r:qmodprimary}
Prior to \cite{kmmeso}, primary decomposition of monoid congruences was developed by Grillet \cite{grilletPrimary}, but these decomposition are too course to effectively recover primary components at the level of binomial ideals \cite[Example~2.22]{kmmeso}.  Nevertheless, in an effort to create a more complete picture, we also generalize primary congruences to our current setting of monoid module congruences.  
\end{remark}

\begin{thm}\label{t:qmodprimary}
A finitely generated $Q$-module $T$ is primary if and only if it has exactly one associated prime ideal.  
\end{thm}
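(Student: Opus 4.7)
The plan is to establish each direction of the biconditional separately, with candidate associated prime $P = \{q \in Q : q \text{ acts nilpotently on } T\}$, mirroring \cite[Theorem~6.1]{kmmeso} but with the care demanded by Remark~\ref{r:cancellative} about separating constructions in $Q$ from those in $T$.  Throughout, associated primes are taken relative to the trivial congruence on $T$, so witnesses are elements whose image in the localization $T_P$ from Definition~\ref{d:localization} exhibits the collapsing behavior of Definition~\ref{d:qmodwitness}.

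For the forward direction, assume $T$ is primary.  I would first check that $P$ is a prime ideal: it is an ideal because any $Q$-action composed after a nilpotent action is nilpotent, and $Q \setminus P$ is closed under addition because the composition of injective maps $\phi_q$ is injective.  To see $P$ is associated, pick $w \in T$ whose image in $T_P$ is maximal under Green's preorder among non-nils, using Noetherianity of $Q$ and finite generation of $T$ to ensure such a $w$ exists.  For each generator $p \in P$, nilpotency of $p$ forces $p \cdot \ol w$ to be a nil $\nil$ of $T_P$, since otherwise Green's maximality of $\ol w$ and the inequality $\ol w \preceq p \cdot \ol w$ would give $p \cdot \ol w \sim \ol w$, iterating to contradict nilpotency; then $\nil = p \cdot \nil = p \cdot \ol w$ places $\nil$ in the $\ker(\phi_p)$-class of $\ol w$, yielding a non-singleton class in which $\ol w$ is not exclusively maximal because $\ol w \preceq \nil$.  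Uniqueness is a localization argument: if $P'$ is associated with witness $w'$, each generator of $P'$ acts non-injectively at $\ol{w'}$ in $T_{P'}$, hence is not cancellative on $T$ and lies in $P$, so $P' \subseteq P$; conversely a $p \in P \setminus P'$ would be inverted in $T_{P'}$ yet nilpotent on $T$, an impossibility in the presence of a non-nil witness.  The edge case $P' = \emptyset$ matches $P = \emptyset$ because a $Q$-orbit without a nil forbids any nilpotent-acting element.

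For the reverse direction I argue by contrapositive: assuming $T$ is not primary, I produce two distinct associated primes.  Fix $q_0 \in Q$ that is neither cancellative nor nilpotent, witnessed by $t_1 \ne t_2 \in T$ with $q_0 \cdot t_1 = q_0 \cdot t_2$ and by $s \in T$ with $(nq_0) \cdot s \notin N(T)$ for all $n$.  The plan is to exhibit an associated prime $P_1$ not containing $q_0$, obtained by applying the forward-direction construction to a Noetherian-maximal submodule of $T$ on which $q_0$ acts cancellatively (this submodule contains the orbit of $s$), and a second associated prime $P_2$ containing $q_0$, built from the non-cancellative pair $t_1, t_2$ by localizing at a minimal prime over an ideal detecting their collapse.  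Since $q_0 \in P_2 \setminus P_1$, these primes are distinct, contradicting uniqueness.  The main obstacle is verifying that both candidate witnesses satisfy all conditions of Definition~\ref{d:qmodwitness}—non-nil image in the respective localization, non-singleton class under each $\ker(\phi_p)$, and failure of exclusive maximality therein—which requires a careful interplay between Green's preorder on $T$, the localization of Definition~\ref{d:localization}, and the disjointness of nilpotent and cancellative loci that is absent without the primary hypothesis.
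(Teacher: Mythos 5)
Your forward direction is sound and is essentially the paper's argument: take $P$ to be the set of elements acting nilpotently, exhibit a witness for $P$ (the paper picks a non-nil $w$ with $P\cdot w\subset N(T)$ using finite generation of $P$, which is your Green's-maximal $w$ in different clothing), and then prove both containments $P'\subseteq P$ and $P\subseteq P'$ for any associated $P'$ exactly as you do.

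The reverse direction is where there is a genuine gap. You argue by contrapositive and need to produce, from a $q_0$ that is neither cancellative nor nilpotent, an associated prime $P_1$ with $q_0\notin P_1$. Your proposed mechanism --- ``apply the forward-direction construction to a Noetherian-maximal submodule of $T$ on which $q_0$ acts cancellatively'' --- does not go through as stated, for two reasons. First, such a submodule need not be primary, so the forward-direction construction (which presupposes the cancellative/nilpotent dichotomy) does not apply to it; you would be assuming on the submodule exactly what you are trying to prove. Second, even granting that the submodule has an associated prime $P_1\not\ni q_0$, being a witness is a condition checked in the localization $T_{P_1}$ of the \emph{whole} module (Definition~\ref{d:qmodwitness}): the $\ker(\phi_p)$-classes and the exclusive-maximality test can change when you pass from $U_{P_1}$ to $T_{P_1}$ for a submodule $U\subset T$, so a witness for $U$ is not automatically a witness for $T$. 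You flag precisely this verification as ``the main obstacle'' and leave it unresolved, but it is the entire content of this implication. (Your construction of $P_2\ni q_0$ from the non-injective pair $t_1,t_2$, by taking a prime minimal over $\{q: q\cdot t_1=q\cdot t_2\}$, is fine --- it is the same witness-finding argument used in Theorem~\ref{t:qmodmesodecomp}.)

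The paper avoids this difficulty by arguing the implication directly rather than by contrapositive: if $T$ has exactly one associated prime $P$, then for each $t\in T$ the cyclic submodule $\langle t\rangle$ is isomorphic to $Q$ modulo a congruence all of whose witnesses witness $P$, hence $\langle t\rangle$ is $P$-primary by \cite[Corollary~4.21]{kmmeso}; nilpotence of each $p\in P$ and cancellativity of each $f\notin P$ on every $\langle t\rangle$ then yield the same on $T$. If you want to keep your contrapositive structure, you would need to first establish the transfer statement relating witnesses of the cyclic submodules $\langle t_i\rangle$ to witnesses of $T$ (as is done for key witnesses in Theorem~\ref{t:finitekey}), and then invoke the monoid-congruence theory to extract a prime avoiding $q_0$ from the non-nilpotency of $q_0$ on some $\langle t_i\rangle$; as written, that step is missing.
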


\begin{proof}
Suppose $T$ is primary.  The set of elements with nilpotent action on $T$ is a prime ideal $P \subset Q$.  Since $P$ is finitely generated, some non-nil element $w \in T$ satisfies $P \cdot w \subset N(T)$.  This means $w$ is a witness for $P$, so $P$ is associated to $T$.  Since $Q \setminus P$ acts cancellatively on $T$, any prime associated to $T$ is contained in $P$.  Moreover, localizing $T$ at any prime $P'$ contained in $P$ identifies any element $w \in T$ with the nil in its orbit, since some $p \in P \setminus P'$ gives $p \cdot w \in N(T)$.  Thus, any associated prime must also contain $P$, which implies $P$ is the only associated prime.  

Now suppose $T$ has only one associated prime $P \subset Q$.  If $P = \emptyset$, then every element of $Q$ acts cancellatively on $T$.  Now suppose $P$ is nonempty, and fix $t \in T$.  The submodule $\<t\>$ is isomorphic to $Q$ modulo some congruence.  Since each witness in $\<t\>$ is a witness for $P$, $\<t\>$ is $P$-primary by \cite[Corolary~4.21]{kmmeso}.  This means each $p \in P$ acts nilpotently on $\<t\>$ and each $f \in Q \setminus P$ acts cancellatively on $\<t\>$.  Since $t$ is arbitrary, each $p \in P$ acts nilpotently on $T$ and each $f \in Q \setminus P$ acts cancellatively on $T$, meaning $T$ is $P$-primary.  
% This means each orbit of $T$ has a nil, 
% and each $f \in Q \setminus P$ acts cancellatively on $T$ 
% since every witness for $T$ is a $P$-witness.  
% The set of elements in $Q$ with nilpotent action on $T$ is a prime $P' \subset P$.  
% Assume $p \in P \setminus P'$, and consider the set 
% $$U = \{u \in T : (np)u \notin N(T) \text{ for all } n \ge 1\} \subset T$$
% of elements $u$ on which $p$ does not act nilpotently.  
% Fix $u \in U$ maximal under the action of $P'$.  
% Since $P'$ is finitely geneated and $U$ is closed under the action of $p$, 
% we can replace $u$ with $(np)u$ for some $n \ge 1$ we can assume $uP' \subset N(T)$.  
% This means some multiple of $u$ is a witness for a prime not containing $p$ and thus propertly contained in $P$, which is a contradiction.  
% This means $P' = P$, and thus $T$ is $P$-primary.  
\end{proof}

Lemma~\ref{l:greensprimary} generalizes \cite[Lemma~2.19]{kmmeso} and is central to several proofs, including Theorems~\ref{t:qmodmesoprimary} and~\ref{t:qmodmesodecomp}.  

\begin{lemma}\label{l:greensprimary}
Fix a connected, $P$-primary $Q$-module $T$, and set $F = Q \setminus P$.  Let $T/F$ denote the quotient of $T$ by the congruence
$$t \sim t' \text{ whenever } f \cdot t = g \cdot t' \text{ for } f, g \in F$$
Then Green's preorder on $T/F$ is a partial order, and $T/F$ is finite.  
\end{lemma}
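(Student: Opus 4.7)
The plan is first to verify that Green's preorder on $T/F$ is antisymmetric by a case analysis depending on whether the Green-comparing monoid elements lie in $F$ or in $P$, and then to reduce the finiteness of $T/F$ to the monoid case handled in \cite[Lemma~2.19]{kmmeso}. The relation $\sim$ on $T$ is routinely checked to be a congruence (using that $F$ is a submonoid), so this step is not the interesting one.

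For antisymmetry, suppose $[t] \preceq [t']$ and $[t'] \preceq [t]$ in $T/F$, so there exist $q, q' \in Q$ with $q \cdot t \sim t'$ and $q' \cdot t' \sim t$. Chaining these and using the congruence property gives $(q + q') \cdot t \sim t$, so some $f, g \in F$ satisfy $(f + q + q') \cdot t = g \cdot t$. If both $q, q' \in F$, then $f + q' \in F$, and rewriting the equation as $(f + q') \cdot (q \cdot t) = g \cdot t$ shows $q \cdot t \sim t$, whence $[t'] = [q \cdot t] = [t]$. Otherwise, say $q \in P$; then $q + q' \in P$ acts nilpotently on $T$, so iterating $(q + q') \cdot t \sim t$ yields $n(q + q') \cdot t \sim t$ for all $n$, and for $n$ large the left side is a nil $\nil$ (unique in $T$ by connectedness and Lemma~\ref{l:nilorbit}). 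From $t \sim \nil$ we obtain $f \cdot t = g \cdot \nil = \nil = f \cdot \nil$ for some $f, g \in F$, and cancellativity of $\phi_f$ forces $t = \nil$. The analogous reasoning applied to $t' \sim q \cdot t = \nil$ gives $t' = \nil$, so $[t] = [t']$.

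For finiteness, pick generators $t_1, \ldots, t_r$ with $T = \<t_1, \ldots, t_r\>$. Each submodule $\<t_i\>$ inherits $P$-primality from $T$ (nilpotency and cancellativity of morphisms $\phi_q$ restrict to submodules), and by Theorem~\ref{t:isothm} it is isomorphic to $Q/\sim_{t_i}$, where $\sim_{t_i}$ is the kernel of the surjection $Q \to \<t_i\>$ given by $q \mapsto q \cdot t_i$. By Lemma~\ref{l:primarymonoidcase}, $\sim_{t_i}$ is a $P$-primary monoid congruence, so \cite[Lemma~2.19]{kmmeso} applies to give that $\<t_i\>/F$ is finite. The induced map $\<t_i\>/F \to T/F$ is well defined because the $\sim$-relation on $\<t_i\>$ is the restriction of that on $T$, and every element of $T$ lies in some $\<t_i\>$, so the images cover $T/F$. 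Hence $|T/F| \leq \sum_i |\<t_i\>/F|$ is finite.

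The main obstacle will be the second case of the antisymmetry argument, where we must simultaneously leverage the nilpotency of $P$-elements (to produce a nil via iteration of $(q+q') \cdot t \sim t$) and the cancellativity of $F$-elements (to collapse $t$ onto that nil), relying on the uniqueness of the nil in a connected module from Lemma~\ref{l:nilorbit}. Everything else is either a direct verification or a clean reduction to the monoid congruence statement already proved by Kahle and Miller.
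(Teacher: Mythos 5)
Your proof is correct, and for the antisymmetry half it is in essence the paper's argument: both hinge on the observation that a relation of the form $(f+q+q')\cdot t = g\cdot t$ with $f,g \in F$ forces the connecting elements into $F$ (or else collapses everything onto the nil). You execute this more carefully than the paper does --- you work at the level of classes in $T/F$ rather than with Green's classes in $T$ itself, and you handle explicitly the degenerate case where $q+q' \in P$, using nilpotency, uniqueness of the nil in a connected module (Lemma~\ref{l:nilorbit}), and cancellativity of $F$ to force $t = t' = \nil$; the paper's proof silently absorbs this case. Where you genuinely diverge is the finiteness claim: the paper merely asserts that since $T$ has a nil and $Q$ and $T$ are finitely generated, $T/F$ is finite, whereas you reduce to the monoid statement \cite[Lemma~2.19]{kmmeso} by covering $T$ by the cyclic submodules $\<t_i\>$, identifying each with $Q/\ker(q \mapsto q\cdot t_i)$ via Theorem~\ref{t:isothm}, and invoking Lemma~\ref{l:primarymonoidcase}. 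This is the same reduction-to-cyclic-submodules device the paper uses in Theorems~\ref{t:qmodprimary} and~\ref{t:finitekey}, and it makes the finiteness step genuinely rigorous rather than asserted. One small detail you gloss over: you need each $\<t_i\>$ to be $P$-primary for the \emph{same} prime $P$ in order to quotient by the same $F$; this holds because for non-nil $t_i$ no $f \in F$ can act nilpotently on $\<t_i\>$ (injectivity of $\phi_f$ together with $nf\cdot t_i = \nil = nf \cdot \nil$ would force $t_i = \nil$), so the set of nilpotently acting elements on $\<t_i\>$ is exactly $P$, while for nil $t_i$ the submodule is a single point and contributes one class. With that remark added, the argument is complete.
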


\begin{proof}
Since $T$ is $P$-primary, the morphisms $T \stackrel{\cdot f}\too T$ are injective for all $f \in F$, so $\til$ is a well-defined congruence.  If $\<t\> = \<t'\>$, then $f \cdot t = t'$ and $g \cdot t' = t$ for some $f, g \in Q$.  This means $f \cdot g \cdot t = t$, so $f$ and $g$ are not nilpotent and lie in $F$, meaning $t$ and $t'$ are identified in $T/F$.  This proves Green's preorder is antisymmetric.  

Now, the remaining statement is trivial if $P = \emptyset$, so suppose $P$ is nonempty.  $T$ must have a nil $\infty$ since $Q$ contains elements with nilpotent action on $T$.  The image of $\infty$ in $T/F$ remains nil as well.  Thus, since $Q$ and $T$ are both finitely generated, $T/F$ must be finite.  
% For each $t \in T$, $\<t\> \iso Q/\app$ for $\app = \ker(Q \to \<t\>)$, 
% and every witness for $\app$ is a $P$-witness.  That is, $\app$ is $P$-primary.  
% Thus, if $Q$ and $T$ are both finitely generated, then by [KM] $Q/\app$ is finite, 
% and by induction so is $T/F$.  Finiteness guarantees $T/F$ has a maximal (and thus nil) element.  
\end{proof}

\begin{defn}\label{d:associatedcong}
Fix a $Q$-module $T$, a monoid prime $P \subset Q$, and a non-nil $w \in T$.  
\begin{enumerate}
\item Let $G_P \subset Q_P$ denote the unit group of $Q_P$, and let $K_q^P \subset G_P$ denote the stabilizer of $\ol w \in T_P$ under the action of $G_P$.  
\item Let $\app$ denote the congruence on $Q_P$ that sets $a \approx b$ whenever
\begin{enumerate}
\item $a$ and $b$ lie in $P_P$, or 
\item $a$ and $b$ lie in $G_P$ and $a - b \in K_q^P$.  
\end{enumerate}
\item The \emph{$P$-prime congruence of $T$ at $w$} is given by $\ker(Q \to Q_P/\app)$.  
\item The $P$-prime congruence at $w$ is \emph{associated to $T$} if $w$ is a key witness for $T$.  
\end{enumerate}
\end{defn}

\begin{remark}\label{r:associatedcong}
In Definition~\ref{d:associatedcong}, we are forced to make another distinction between $T$ and $Q$: should an associated prime congruence of $T$ be a congruence on $T$ or on $Q$?  The condition for a monoid congruence $\til$ to be $P$-mesoprimary can be characterized in terms of the congruence on $Q \setminus P$ induced by its action on $Q/\til$ \cite[Corollary~6.7]{kmmeso}.  The partly cancellative condition is what ensures that each $t \in T$ induces the same congruence, which in our setting is a condition on elements of $T$.  
\end{remark}

Next, we characterize mesoprimary $Q$-modules in terms of their associated prime congruences, generalizing \cite[Theorem~6.7]{kmmeso} and \cite[Corollary~6.7]{kmmeso}.  

\begin{thm}\label{t:qmodmesoprimary}
For a $Q$-module $T$, the following are equivalent.  
\begin{enumerate}
\item[(1)] 
$T$ is mesoprimary.  

\item[(2)] 
$T$ has exactly one associated prime congruence.  

\item[(3)] 
$T$ is $P$-primary, and for $F = Q \setminus P$, 
$$\ker(F \to \<t\>) = \ker(F \to \<t'\>)$$
for each non-nil $t, t' \in T$.  
\end{enumerate}
\end{thm}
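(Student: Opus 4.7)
The plan is to establish the cycle (1) $\Rightarrow$ (3) $\Rightarrow$ (2) $\Rightarrow$ (1), using Theorem~\ref{t:qmodprimary} to identify the unique associated prime ideal $P$ whenever $T$ is primary, and Lemma~\ref{l:greensprimary} to analyze $F$-orbit structure inside cyclic $P$-primary submodules. The overall argument parallels the monoid-congruence versions \cite[Theorem~6.7]{kmmeso} and \cite[Corollary~6.7]{kmmeso}, but every step must be handled with an eye toward the $T/Q$ distinction flagged in Remark~\ref{r:associatedcong}.

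For (1) $\Rightarrow$ (3), Theorem~\ref{t:qmodprimary} supplies the unique associated prime $P$ and $F = Q \minus P$. Given non-nil $t, t' \in T$ and $a, b \in F$ with $a \cdot t = b \cdot t$, cancellativity of $a$ combined with $t$ non-nil forces $a \cdot t \notin N(T)$ (otherwise, since $a \cdot t$ would be a nil and therefore satisfy $a \cdot (a \cdot t) = a \cdot t$, cancellativity of $a$ yields $t = a \cdot t \in N(T)$, a contradiction), so partly cancellative applied at $t$ gives $\phi_a = \phi_b$ globally and in particular $a \cdot t' = b \cdot t'$; symmetry completes the equality of kernels. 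For (3) $\Rightarrow$ (2), $P$-primariness pins the underlying prime of every associated prime congruence as $P$ by Theorem~\ref{t:qmodprimary}, so each such congruence is determined by the stabilizer $K_w^P \subset G_P$ of its defining key witness $\ol w \in \ol T_P$. Writing a unit $g = f_1 - f_2 \in G_P$ with $f_1, f_2 \in F$, the relation $g \in K_w^P$ unfolds to an equality in $\ol T_P$ that, after reconciling nil-basin contributions via Lemma~\ref{l:nilorbit}, reduces to the kernel data $\ker(F \to \<w\>)$ on $T$; condition (3) then makes this data uniform across all key witnesses, yielding a single associated prime congruence.

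For (2) $\Rightarrow$ (1), I argue $P$-primariness by contradiction: a second associated prime $P'$ would admit a witness $v$, and maximizing $v$ along Green's preorder inside the $P'$-primary cyclic submodule $\<v\>$ (finite by Lemma~\ref{l:greensprimary}) would produce a key witness for $P'$ and hence a distinct associated prime congruence, contradicting uniqueness. For partly cancellative, given $t$ with $a \cdot t = b \cdot t \notin N(T)$ for cancellative $a, b \in F$, the equality $\phi_a(t') = \phi_b(t')$ is immediate for nil $t'$, while for non-nil $t'$, Lemma~\ref{l:greensprimary} applied inside $\<t'\>$ supplies a top non-nil element $w'$ which is a key witness for $P$; uniqueness of the associated prime congruence forces $a - b \in K_{w'}^P$, and unraveling via the cancellative $F$-action on $\<w'\>$ delivers $a \cdot t' = b \cdot t'$, as required. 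The hardest step, recurring in both (3) $\Rightarrow$ (2) and (2) $\Rightarrow$ (1), will be the translation between the stabilizer $K_w^P$ computed inside the localization $\ol T_P$ and the concrete $F$-kernel data on $T$, since localization a priori permits extra identifications via the nilpotent action of $P$ on the unique nil of each $Q$-orbit; controlling these identifications via Lemma~\ref{l:nilorbit} and the key-witness condition is precisely where the $T/Q$ distinction emphasized in Remark~\ref{r:associatedcong} enters the argument.
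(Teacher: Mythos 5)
Your cycle of implications matches the paper's, and your (1)$\implies$(3) and (3)$\implies$(2) are essentially the paper's arguments (your observation that $a\cdot t\notin N(T)$ follows from cancellativity of $a$ is a nice touch the paper leaves implicit). The problem is in (2)$\implies$(1), in the step establishing that each non-nil $t'$ is partly cancellative. You pass from $a-b\in K_{w'}^P$ for the top key witness $w'\in\<t'\>$ to the desired equality $a\cdot t'=b\cdot t'$ by ``unraveling via the cancellative $F$-action on $\<w'\>$.'' This does not work: $w'=q'\cdot t'$ with $q'$ generally in $P$ (you reach the top of $\<t'\>$ precisely by acting with nilpotent elements), so $t'\notin\<w'\>$ and no amount of cancelling by elements of $F$ recovers $t'$ from $w'$. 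Knowing $a\cdot w'=b\cdot w'$ constrains the action of $a$ and $b$ only on the submodule $\<w'\>$, which sits strictly above $t'$. The descent from witnesses back down to arbitrary non-nil elements is the substantive content of the implication, and your sketch omits it.

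The paper closes this gap by writing $\<t'\>\iso Q/\til$ via Theorem~\ref{t:isothm}, observing that $\til$ inherits the unique associated prime congruence, and invoking the monoid-level result \cite[Theorem~6.1]{kmmeso} to conclude $\til$ is mesoprimary; the partly cancellative property of the monoid quotient then yields the two-way equivalence ``$a\cdot w=b\cdot w$ for a witness $w\in\<t'\>$ if and only if $a\cdot t'=b\cdot t'$,'' which is exactly the downward direction you need. If you want to avoid citing the monoid theorem, you must redo its descent argument: assume $a\cdot t'\ne b\cdot t'$, pass to a Green's class maximal among those $\ol q\in Q_P$ with $\ol q\cdot a\cdot\ol{t'}\ne \ol q\cdot b\cdot\ol{t'}$ (finiteness from Lemma~\ref{l:greensprimary}), and check that the resulting element is a key witness whose stabilizer excludes $a-b$, contradicting uniqueness of the associated prime congruence. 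A secondary, lesser issue: in your primariness step you call $\<v\>$ ``$P'$-primary,'' which is unjustified before primariness of $T$ is known; what you actually need there is that every associated prime ideal admits a \emph{key} witness, which again requires the maximal-Green's-class construction rather than an appeal to primariness.
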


\begin{proof}
From any of these conditions, we conclude that $T$ is primary, say with associated prime $P$.  Notice that $\ker(F \to \<t\>)$ is the prime congruence at $t$ restricted to $F$.  If these congruences coincide for all $t \in T$, then in particular they coincide for all witnesses, so $T$ has exactly one associated prime congruence.  This proves (3) $\implies$ (2).  

Now suppose $T$ is mesoprimary, and fix $t, t' \notin N(T)$.  Then since $t$ and $t'$ are both partly cancellative, $a \cdot t = b \cdot t$ if and only if $a \cdot t' = b \cdot t'$ for $a, b \notin P$.  This means the kernels $\ker(F \to \<t\>)$ and $\ker(F \to \<t'\>)$ coincide.  This proves (1) $\implies$ (3).  

Lastly, suppose $T$ has exactly one associated prime congruence, and fix $t \in N(T)$.  Fix $a, b \notin P$ and let $\phi_a,\phi_b:T \to T$ denote the actions of $a$ and $b$ on $T$, respectively.  By Theorem~\ref{t:isothm}, $\<t\> \iso Q/\til$ for some congruence $\til$.  Since $T$ has only one associated prime congruence, so does $\til$, so by \cite[Theorem~6.1]{kmmeso}, $\til$ is mesoprimary.  This means $a \cdot t = b \cdot t$ if and only $a \cdot w = b \cdot w$ for any witness $w \in \<t\>$.  Since $T$ has only one associated prime congruence, these actions also coincide for all witnesses in $T$, meaning $\phi_a = \phi_b$.  This proves (2) $\implies$ (1), thus completing the proof.  
\end{proof}

We conclude this section with Theorem~\ref{t:finitekey}, which ensures that the mesoprimary decomposition constructed in Theorem~\ref{t:qmodmesodecomp} has finitely many components.  

\begin{thm}\label{t:finitekey}
Any finitely generated $Q$-module $T$ has only finitely many Green's classes of key witnesses.  
\end{thm}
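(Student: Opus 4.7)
The plan is to reduce the finiteness claim to its monoid-congruence analogue \cite[Theorem~4.22]{kmmeso} by combining the cyclic decomposition from Corollary~\ref{c:modstruct} with the Noetherian hypothesis on $Q$.

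First, I would observe that since $Q$ is Noetherian, it has only finitely many prime ideals (each finitely generated), and each key witness is a witness for a unique prime.  Hence it suffices to fix a prime $P\subset Q$ and bound the number of Green's classes of key $\til$-witnesses for $P$ in $T$.

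Next, using finite generation, write $T=\<t_1,\dots,t_r\>$, so every key witness $w\in T$ lies in some cyclic submodule $\<t_i\>$.  By Corollary~\ref{c:modstruct}, each $\<t_i\>\iso Q/\til_i$ for a congruence $\til_i$ on $Q$, and the restriction of $\til$ to $\<t_i\>$ pulls back to a congruence $\til_i'$ on $Q$ refining $\til_i$.  Applying the monoid version \cite[Theorem~4.22]{kmmeso} to $\til_i'$ yields a finite list of Green's classes of key $\til_i'$-witnesses in $Q$; pushing these forward to $\<t_i\>$ via $q\mapsto q\cdot t_i$ and ranging over $i=1,\dots,r$ gives a finite set of candidate Green's classes in $T$.

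The main obstacle is the asymmetry between the module and monoid notions of key witness: for $w\in T$ the non-singleton-class and not-exclusively-maximal conditions are tested in the ambient $\ol T_P$, and an aide establishing the non-singleton class may live in some $\<t_j\>$ with $j\ne i$, while the cyclic reduction only recognizes aides internal to $\<t_i\>$.  To close the gap I would show that any key $\til$-witness $w\in\<t_i\>$ for $P$ in $T$ is Green-equivalent in $T$ to a key witness whose aides all lie inside $\<t_i\>$.  Concretely, an aide $w'\in\<t_j\>$ for a generator $p\in P$ produces a relation $p\cdot\ol w=p\cdot\ol w'$ in $\ol T_P$, placing $p\cdot\ol w$ in the intersection $\<\ol t_i\>\cap\<\ol t_j\>$; tracking this over the finitely many generators of $P$ produces a representative within the Green's class of $w$ whose aides are all internal to $\<t_i\>$, reducing to the cyclic case already handled.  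Combining the $r$ finite lists then yields the theorem.
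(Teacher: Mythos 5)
Your overall route is the same as the paper's: decompose $T$ into the cyclic submodules $\<t_i\> \iso Q/\til_i$ coming from a finite generating set (Corollary~\ref{c:modstruct}) and invoke the Kahle--Miller finiteness theorem for key witnesses of monoid congruences (the relevant citation is \cite[Theorem~5.6]{kmmeso}, not Theorem~4.22). The preliminary reduction to a single prime $P$ is harmless but unnecessary, since the monoid result already bounds key witnesses over all primes at once. More importantly, you have correctly put your finger on the delicate point in this reduction: the key-witness conditions for $w \in \<t_i\>$ are tested in $\ol T_P$, where the class of $\ol w$ under $\bigcap_{p\in P}\ker(\phi_p)$ may acquire members from other cyclic submodules, so a key $T$-witness lying in $\<t_i\>$ need not be a key $\til_i$-witness. (The paper's own proof asserts a ``bijection'' between the two notions at exactly this point; the implication that does hold cheaply goes the other way, from key $\til_i$-witnesses to key $T$-witnesses, since non-singleton classes and failures of exclusive maximality persist when passing to the larger ambient class.)

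The problem is that your proposed repair does not close this gap. You assert that every key $T$-witness in $\<t_i\>$ is Green-equivalent to one whose aides all lie in $\<t_i\>$, but no argument is given, and the mechanism you offer --- that $p\cdot\ol w = p\cdot\ol w'$ places $p\cdot\ol w$ in $\<\ol t_i\>\cap\<\ol t_j\>$, after which one ``tracks this over the generators of $P$'' --- produces no such representative. Green-equivalence is rigid here: if $\ol v = a\cdot\ol w$ with $\<\ol v\> = \<\ol w\>$ in $T_P$, then acting by $a$ carries the $\bigcap_p\ker(\phi_p)$-class of $\ol w$ into that of $\ol v$ and carries an aide lying in $\<\ol t_j\>_P$ to an element of the same $\<\ol t_j\>_P$; nothing in this process converts an external aide into an internal one, and nothing you write rules out a witness all of whose aides, in every Green-equivalent position, are external. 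So the class of witnesses your cyclic reduction cannot see is exactly the class your final step fails to handle. To finish, one needs a genuinely separate argument for witnesses whose non-singleton classes arise from interactions between distinct cyclic submodules --- for instance by working directly with the finitely generated congruence $\bigcap_{p\in P}\ker(\phi_p)$ on $\ol T_P$, or by inducting on the number of generators using the submodules $\<t_i\>\cap\<t_j\>$. As written, the last step restates the difficulty rather than resolving it.
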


\begin{proof}
Fix a generating set $g_1, \ldots, g_k$ for $T$.  For each $g_i$, consider the map $\phi_i:Q \to \<g_i\>$ and let $\til_i = \ker\phi_i$.  The induced isomorphism $Q/\til_i \to \<g_i\>$ gives a bijection between key $T$-witness and key $\til_i$-witnesses, and by \cite[Theorem~5.6]{kmmeso}, each congruence $\til_i$ has only finitely many Green's classes of key witnesses.  Since $g_1, \ldots, g_k$ generate $T$, this bounds the number of Green's classes of key $T$-witnesses.  
\end{proof}

%%%%%%%%%%%%%%%%%%%%%%%%%%%%%%%%%%%%%%%%%%%%%%%%%%%%%%
\section{Mesoprimary decomposition of monoid modules}%
\label{s:qmodmesodecomp}%%%%%%%%%%%%%%%%%%%%%%%%%%%%%%
%%%%%%%%%%%%%%%%%%%%%%%%%%%%%%%%%%%%%%%%%%%%%%%%%%%%%%

In this section, we construct a mesoprimary decomposition for any monoid module congruence $\til$ (with one caveat; see Remark~\ref{r:qmodmesodecomp}).  First, we construct a mesoprimary component for each $\til$-witness.  

\begin{defn}\label{d:qmodcoprincipal}
Fix a $Q$-module $T$.  A \emph{cogenerator} of $T$ is a non-nil element $t \in T$ with $q \cdot t \in N(T)$ for every nonunit $q \in Q$.  A $Q$-module $T$ is \emph{coprincipal} if it is $P$-mesoprimary and all its cogenerators lie in the same Green's class in $T_P$.  A congruence $\til$ on $T$ is \emph{coprincipal} if $T/\til$ is a coprincipal $Q$-module.  
\end{defn}

\begin{defn}\label{d:qmodcoprincomp}
Fix a $Q$-module $T$, a prime $P \subset Q$, and a witness $w \in T$ for $P$.  Let $\ol q$ denote the image of $q \in Q$ in $Q_P$, and $\ol t$ denote the image of $t \in T$ in $T_P$.  
\begin{itemize}
\item 
The \emph{order ideal $T_{\preceq w}^P$ cogenerated by $w$ at $P$} consists of those $a \in T$ whose image $\ol a \in T_P$ precedes $\ol w$ under Green's preorder.  

\item 
The \emph{congruence cogenerated by $w$ along $P$} is the equivalence relation $\til_w^P$ on $T$ that sets all elements outside of $T_{\preceq w}^P$ equivalent and sets $a \sim_w^P b$ whenever $\ol a$ and $\ol b$ differ by a unit in $T_P$ and $q \cdot \ol a = q \cdot \ol b = \ol w \in T_P$ for some $q \in Q_P$.  

\end{itemize}
\end{defn}

Lemma~\ref{l:qmodcoprincomp} justifies the nomenclature in Definition~\ref{d:qmodcoprincomp}.  

\begin{lemma}\label{l:qmodcoprincomp}
The congruence cogenerated by $w$ along $P$ is a coprincipal congruence on $T$ cogenerated by $w$.  Furthermore, $T/\til_w^P$ is properly connected, and if $T \setminus T_{\preceq w}^P$ is nonempty, then it is the nil class of $T/\til_w^P$.  
\end{lemma}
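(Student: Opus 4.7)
The plan is to prove the lemma in four stages: first verify that $\til_w^P$ is a congruence on $T$; then show $T/\til_w^P$ is $P$-mesoprimary with $w$ a cogenerator; then deduce coprincipality; and finally handle the properly-connected claim and the characterization of the nil class. The central observation underlying every stage is downward absorption of $T_{\preceq w}^P$: if $a \notin T_{\preceq w}^P$, meaning $\ol w \notin \<\ol a\>$ in $T_P$, then $\<\ol{qa}\> \subset \<\ol a\>$ also misses $\ol w$, so $qa \notin T_{\preceq w}^P$ for every $q \in Q$.

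For the congruence check, downward absorption immediately shows the lumped ``outside'' class is preserved by $Q$. For a nontrivial pair $a \sim_w^P b$ inside $T_{\preceq w}^P$ with $\ol b = u \ol a$ (for $u \in G_P$) and $r \cdot \ol a = r \cdot \ol b = \ol w$, I would apply $q \in Q$ to obtain $\ol{qb} = u \ol{qa}$ and then check by cases that $qa$ and $qb$ either both leave $T_{\preceq w}^P$ (in which case they sit in the common nil class) or both remain inside and are related by the same $r$, using commutativity of $Q_P$ and that $u$ acts as an automorphism of $T_P$.

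For $P$-primarity of $T/\til_w^P$, the cancellative half for $f \in F = Q \minus P$ is cleanest: since $\ol f \in G_P$, one has $\<\ol{fa}\> = \<\ol a\>$ in $T_P$, so $fa \in T_{\preceq w}^P$ iff $a \in T_{\preceq w}^P$, and the explicit form of $\til_w^P$ then lifts $[fa] = [fb]$ to $[a] = [b]$. For $p \in P$ I would iterate in $T_P$ along the chain $\ol a \preceq \ol{pa} \preceq \ol{p^2a} \preceq \cdots$, and argue via Lemma~\ref{l:greensprimary} applied to the cyclic $Q_P$-submodule $\<\ol a\> \iso Q_P/\til$ that this chain must eventually escape $T_{\preceq w}^P$, at which point $[p^n a]$ becomes nil in the quotient. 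Partly cancellativity then follows because $\til_w^P$ has been constructed precisely so that two elements are identified exactly when they share a common $q$-path to $\ol w$ modulo $K_w^P$, which is the kernel that governs the induced $F$-action.

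Coprincipality is short: $[w]$ is non-nil because $w \in T_{\preceq w}^P$, and for any nonunit $q \in Q$, $[qw]$ is either nil or has image in the Green's class of $\ol w$; any other cogenerator of $T/\til_w^P$ must satisfy the same absorbing property and hence localizes into that same Green's class. Proper connectedness follows because every non-nil $a \in T_{\preceq w}^P$ admits $r \in Q_P$ with $r \cdot \ol a = \ol w$, placing $[a]$ in the single $Q$-orbit of $[w]$ in the non-nil part. The identification of $T \minus T_{\preceq w}^P$ as the nil class is then immediate from the construction together with downward absorption. The principal obstacle I anticipate is the nilpotent-action step for $p \in P$, without which the quotient would not even be primary; here the witness hypothesis on $w$ (so that generators of $P$ act nontrivially at $\ol w$) should be essential, and I expect the argument to reduce to the monoid-congruence case of \cite{kmmeso} by passing through the cyclic submodule $\<\ol a\>$ and invoking Lemma~\ref{l:greensprimary}.
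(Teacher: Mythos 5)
Your plan has the same skeleton as the paper's proof: identify $T \minus T_{\preceq w}^P$ as the nil class, get proper connectedness and the cogenerator statement from the fact that every non-nil class has $\ol w$ as a multiple in $T_P$, prove $P$-primariness using Lemma~\ref{l:greensprimary}, and deduce mesoprimariness from the observation that identifications inside $T_{\preceq w}^P$ are governed by the stabilizer $K_w^P$. Your explicit verification that $\til_w^P$ is a congruence (downward absorption of the complement of $T_{\preceq w}^P$, and the fact that the unit $u$ relating $\ol a$ and $\ol b$ necessarily stabilizes $\ol w$) and your explicit cancellativity check for $F = Q\minus P$ are welcome additions that the paper leaves implicit.

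Two steps are thinner than they need to be. First, nilpotency of $p \in P$ on the quotient: Lemma~\ref{l:greensprimary} applies only to a connected \emph{$P$-primary} module, and the cyclic submodule $\<\ol a\> \subset T_P$ to which you propose applying it carries no primariness hypothesis --- primariness of $T/\til_w^P$ is exactly what is being proved, so as written the invocation is circular. The paper instead uses the lemma to conclude that $T_{\preceq w}^P$ meets only finitely many Green's classes of $T_P$, and it is this finiteness that forces every chain $\ol a \preceq p\cdot\ol a \preceq p^2\cdot\ol a \preceq \cdots$ to leave the order ideal; without some such input your chain argument does not by itself rule out the chain staying below $\ol w$ forever. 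Second, partly cancellativity is asserted in one sentence. The mechanism you name is the right one: for $a, b \in Q \minus P$, a non-nil identification $a\cdot t \sim_w^P b\cdot t$ forces the unit $\ol b - \ol a$ to lie in $K_w^P$, and any element of $K_w^P$ induces the same identification at every non-nil degree, so the induced $F$-kernels agree at all non-nil elements. But this computation must actually be carried out; the paper does the equivalent by checking that $a\cdot t \sim_w^P b \cdot t$ if and only if $a \cdot w \sim_w^P b\cdot w$ and then citing Theorem~\ref{t:qmodmesoprimary}, and your write-up should include that verification rather than treating it as automatic from the construction.
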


\begin{proof}
Let $T' = T/\til_w^P$.  Every non-nil element of $T'$ has the image of $w$ as a multiple, so $T'$ is properly connected, and it is clear that the image of $T \setminus T_{\preceq w}^P$ is nil modulo $\til_w^P$ as long as it is nonempty.  Furthermore, $w$ cogenerates $\til_w^P$ since the result of acting by any $p \in P$ lies outside $T_{\preceq w}^P$, and any $t \in T$ with non-nil image in $T'$ satisfies $q \cdot t = w$ for some $q \in Q$, so every cogenerator for $\til_w^P$ lies in the Green's class of $w$ in $T_P$.  

It remains to show that $T'$ is mesoprimary.  By Lemma~\ref{l:greensprimary}, $T_{\preceq w}^P$ has finitely many Green's classes in $T_P$, so each $p \in P$ acts nilpotently on $T'$ and thus $T'$ is $P$-primary.  Furthermore, for each $t \in T$ and for $a, b \in Q \setminus P$, we have $a \cdot t \sim_w^P b \cdot t$ if and only if $a \cdot w \sim_w^P b \cdot w$.  In particular, the $P$-prime congruences at the non-nil elements of $T'$ coincide, so by Theorem~\ref{t:qmodmesoprimary}, $T'$ is mesoprimary.  
\end{proof}

\begin{defn}\label{d:qmodmesodecomp}
Fix a $Q$-module $T$ and a congruence $\til$ on $T$.  
\begin{enumerate}
\item 
An expression $\til = \bigcap_i \til_i$ of $\til$ as the common refinement of finitely many mesoprimary congruences is a \emph{mesoprimary decomposition} if, for each component $\til_i$ with associated prime ideal $P \subset Q$, the $P$-prime congruences of $\til$ and $\til_i$ at each cogenerator for $\til_i$ coincide.  

\item 
A mesoprimary decomposition $\til = \bigcap_i \til_i$ is \emph{key} if, for each $P$-mesoprimary component $\til_i$, every cogenerator for $\til_i$ is a key $P$-witness for $\til$.  

\end{enumerate}
\end{defn}

We are now ready to give the main result of this paper.  Theorems~\ref{t:finitekey} and~\ref{t:qmodmesodecomp} together imply, as a special case, that every monoid module with at most one nil element admits a key mesoprimary decomposition (see Remark~\ref{r:qmodmesodecomp}).  

\begin{thm}\label{t:qmodmesodecomp}
Fix a congruence $\til$ on a $Q$-module $T$.  The common refinement of the coprincipal congruences cogenerated by the key witnesses of $\til$ identifies only the nil elements of $T/\til$.  
\end{thm}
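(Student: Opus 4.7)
The plan is to prove both containments between $\til$ and the common refinement of the $\til_w^P$ as $w$ ranges over key $\til$-witnesses and $P$ ranges over their associated primes. The direction $\til \subseteq \bigcap \til_w^P$ is immediate: each coprincipal congruence $\til_w^P$ is defined in terms of images in $T_P = (T/\til)_P$, so it factors through $T/\til$ and is coarser than $\til$. It remains to show that any two elements $t, t' \in T$ with distinct, non-nil images in $T/\til$ are separated by some $\til_w^P$; after passing to $T/\til$, it suffices to assume $\til$ is equality and that $t, t'$ are non-nil.

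By Corollary~\ref{c:modstruct}, $T$ is a quotient of $\bigoplus_{i=1}^r Q$, with cyclic submodules $\<g_i\> \iso Q/\sigma_i$ covering every non-nil element. We split into two cases. Case (a): $t, t' \in \<g_i\>$ for some $i$. Apply the key mesoprimary decomposition theorem for monoid congruences \cite[Theorem~8.4]{kmmeso} to $\sigma_i$ to obtain a key $\sigma_i$-witness $w \in Q$ with prime $P$ whose monoid coprincipal congruence separates the lifts of $t$ and $t'$ in $Q$. The bijection between key $\sigma_i$-witnesses and key $\til$-witnesses in $\<g_i\>$ used in the proof of Theorem~\ref{t:finitekey} then identifies $W = w \cdot g_i \in T$ as a key $\til$-witness; since $\til_W^P$ restricts to the cyclic coprincipal congruence from $\sigma_i$ on $\<g_i\>$, we conclude $t \not\sim_W^P t'$. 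Case (b): $t, t'$ share no common cyclic submodule. Then by Lemmas~\ref{l:orbitdecomp} and~\ref{l:nilorbit}, they lie in distinct $Q$-orbits of $T$ once the nils are removed. Using Lemma~\ref{l:greensprimary} applied to the $P$-primary quotient containing the orbit of $t$, we locate a key witness $w$ maximally positioned above $\ol t$, so that $\ol t \preceq \ol w$ in $T_P$. Then $\til_w^P$ places $\ol t$ inside $T_{\preceq w}^P$ while $\ol t'$ (lying in a different orbit) is sent to the nil class outside, so $t \not\sim_w^P t'$.

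The main obstacle is case (a): verifying that key witnesses of the monoid congruence $\sigma_i$ transfer faithfully to key $\til$-witnesses of $T$ while preserving the separation of $t$ from $t'$. A priori, the Green's class of a witness computed in $(T/\til)_P$ may strictly contain the class computed in $\<g_i\>_P$, so the non-singleton and non-exclusively-maximal conditions in Definition~\ref{d:qmodwitness}(4) need to be rechecked, and the restriction of $\til_W^P$ to $\<g_i\>$ must be shown to agree with the monoid coprincipal congruence. A secondary point is guaranteeing, in case (b), the existence of an ``above'' key witness for every non-nil element; this should follow from the finiteness of Green's classes in each $P$-primary localization provided by Lemma~\ref{l:greensprimary}, combined with Theorem~\ref{t:finitekey} to ensure only finitely many candidate witnesses need be examined.
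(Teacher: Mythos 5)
There is a genuine gap, in fact two. First, your case division does not cover all pairs: in case (b) you claim that if $t,t'$ share no common cyclic submodule then they lie in distinct $Q$-orbits, but this is false. Connectivity in Definition~\ref{d:properly} is via an undirected graph, so orbits are closed under zigzags: for $Q=\NN$ and $T=(Q\oplus Q)/\langle 1\cdot g_1\sim 1\cdot g_2\rangle$, the generators $g_1$ and $g_2$ lie in the same $Q$-orbit yet no cyclic submodule contains both. Such pairs fall into neither of your cases, and they are exactly the pairs where the real work lies. Second, in case (a) the ``main obstacle'' you flag is not a technicality you can defer --- it is the heart of the matter. A key witness of the congruence $\sigma_i$ on $Q$ (equivalently, of the standalone $Q$-module $\<g_i\>$) need not be a key witness of $\til$ on $T$: the non-singleton and non-exclusively-maximal conditions of Definition~\ref{d:qmodwitness} are tested against kernel classes computed in all of $\ol T_P$, which may be strictly larger than those computed in $\<g_i\>_P$; likewise the order ideal $T_{\preceq w}^P$ and hence the congruence $\til_w^P$ see elements of $T$ outside $\<g_i\>$, so its restriction to $\<g_i\>$ can be strictly coarser than the monoid coprincipal congruence and may re-identify $t$ and $t'$. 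The bijection invoked in Theorem~\ref{t:finitekey} concerns $\<g_i\>$ as an abstract $Q$-module, not as a submodule of $T$, so it does not repair this.

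The paper's argument avoids the reduction to cyclic submodules entirely. For distinct non-nil $a,b$ in the same connected component it considers the ideal $I=\{q\in Q: q\cdot a=q\cdot b\}$, chooses a prime $P$ minimal over $I$, and uses that minimality to find a Green's-maximal class among the $\ol q\in Q_P$ with $\ol q\cdot\ol a\ne\ol q\cdot\ol b$; setting $w=q\cdot a$ for such a $q$ produces, directly inside $T$, a key $\til$-witness for $P$ whose coprincipal congruence separates $a$ from $b$. If you want to salvage your outline, you would need to replace both cases with a construction of this kind that works intrinsically in $T_P$.
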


\begin{proof}
The nil class of the congruence cogenerated by a witness $w \in T$ for $P$ contains the nil in the connected component of $w$ (if one exists), as well as every element outside of this connected component.  This means any $P$-coprincipal component identifies all of the nil elements of $T$.  

Now, fix distinct $a, b \in T$ and assume $a$ is not nil.  If $a$ and $b$ lie in distinct connected components, then any cogenerated congruence whose order ideal contains $a$ does not identify $a$ and $b$.  Assuming $a$ and $b$ lie in the same connected component, it suffices to find a monoid prime $P \subset Q$ and a key witness $w \in T$ for $P$ such that $a$ and $b$ are not equivalent under $\til_w^P$.  Fix a prime $P$ minimal among those containing the ideal $I = \{q \in Q : q \cdot a = q \cdot b\}$.  Notice that $I$ (and thus $P$) must be nonempty since $a$ and $b$ lie in the same connected~component.  

Since $P$ contains $I$, the elements $a$ and $b$ have distinct images $\ol a$ and $\ol b$ in $T_P$, and each $\ol q \in I_P$ also satisfies $\ol q \cdot \ol a = \ol q \cdot \ol b$.  By minimality of $P_P$ over $I_P$, there is a maximal Green's class among the elements $\{\ol q \in Q_P : \ol q \cdot \ol a \ne \ol q \cdot \ol b\}$.  Pick an element $q \in Q$ such that $\ol q$ lies in this Green's class, and set $w = q \cdot a \in T$.  Then $w$ is a key witness for $P$ by construction, and the localization of $\til_w^P$ does not equate $\ol a$ and $\ol b$ in $T_P$, so $\til_w^P$ does not equate $a$ and $b$ in $T$.  This completes the proof.  
\end{proof}

\begin{cor}\label{c:qmodmesodecomp}
Fix a $Q$-module $T$ and a congruence $\til$ on $T$.  If $T/\til$ has at most one nil element, then $\til$ admits a key mesoprimary decomposition.  
\end{cor}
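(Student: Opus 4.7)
The plan is to combine Theorems~\ref{t:finitekey} and~\ref{t:qmodmesodecomp} directly: form, for each Green's class of key witnesses, the coprincipal congruence cogenerated by a representative, and argue that the resulting finite family is a key mesoprimary decomposition of $\til$.

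First, I would choose one representative $w$ from each Green's class of key $\til$-witnesses, together with its associated prime $P$ (i.e.\ the prime for which $w$ is a key witness), and consider the family $\{\til_w^P\}$ of coprincipal congruences on $T/\til$ from Definition~\ref{d:qmodcoprincipal}. By Theorem~\ref{t:finitekey}, this family is finite. By Lemma~\ref{l:qmodcoprincomp}, each $\til_w^P$ is coprincipal, hence mesoprimary, with $w$ as a cogenerator. Let $\approx$ denote the common refinement. By Theorem~\ref{t:qmodmesodecomp}, $\approx$ identifies only the nil elements of $T/\til$. By hypothesis $T/\til$ has at most one nil, so there is nothing to identify beyond $\til$ itself; therefore $\approx\,=\,\til$.

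Next I would verify the two conditions of Definition~\ref{d:qmodmesodecomp}. For the mesoprimary decomposition condition, I need that for each component $\til_w^P$ the $P$-prime congruence of $\til$ at $w$ coincides with the $P$-prime congruence of $\til_w^P$ at $w$. This follows from the construction: by Definition~\ref{d:qmodcoprincomp}, $\til_w^P$ is built so that in $T_P$ the classes through $w$ and its unit translates are preserved, while everything outside the order ideal $T_{\preceq w}^P$ is collapsed. Hence the stabilizer group $K_w^P$ governing the $P$-prime congruence (Definition~\ref{d:associatedcong}) is the same whether computed in $T/\til$ or $T/\til_w^P$. The key condition is immediate, since by construction each chosen cogenerator is a key $\til$-witness for its associated prime.

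The main obstacle I expect is the bookkeeping in the previous paragraph: showing that passing from $\til$ to $\til_w^P$ does not alter the stabilizer $K_w^P$ of $\ol w$ under the unit group $G_P$, so that the $P$-prime congruences at $w$ coincide. Everything else is either a direct citation (Theorems~\ref{t:finitekey} and~\ref{t:qmodmesodecomp}, Lemma~\ref{l:qmodcoprincomp}) or an immediate unpacking of definitions, so the corollary should fall out in a few lines once that identification is in hand.
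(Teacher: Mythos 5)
Your proposal is correct and follows the same route as the paper, whose entire proof is the one line ``Apply Theorem~\ref{t:qmodmesodecomp} and Lemma~\ref{l:qmodcoprincomp} to $T/\til$''; you simply make explicit the finiteness from Theorem~\ref{t:finitekey} and the verification of Definition~\ref{d:qmodmesodecomp} that the paper leaves implicit. No further comment is needed.
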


\begin{proof}
Apply Theorem~\ref{t:qmodmesodecomp} and Lemma~\ref{l:qmodcoprincomp} to $T/\til$.  
\end{proof}

\begin{remark}\label{r:qmodmesodecomp}
Theorem~\ref{t:qmodmesodecomp} states that mesoprimary decomposition of monoid modules fails to distinguish nil elements from one another, and that this is the only obstruction to constructing mesoprimary decompositions in this setting.  Fortunately, for the purposes of decomposing graded modules over a monoid algebra, these elements all correspond to zero in the module and thus are indistinguishable.  
\end{remark}

%%%%%%%%%%%%%%%%%%%%%%%%%%%%%%%%%%%%%%%%%%%%%%%%%%%%%%%%%%
\section{The category of tightly graded $\kk[Q]$-modules}%
\label{s:tightlygraded}%%%%%%%%%%%%%%%%%%%%%%%%%%%%%%%%%%%
%%%%%%%%%%%%%%%%%%%%%%%%%%%%%%%%%%%%%%%%%%%%%%%%%%%%%%%%%%

Section~\ref{s:qmod} defined the category \QMod~of $Q$-modules, the setting in which mesoprimary decomposition of monoid congruences is generalized in the prior sections of this paper.  In this section, we define the category $\cB_Q$ of tightly graded $\kk[Q]$-modules (Definition~\ref{d:bmod}), the objects of which are graded by objects of \QMod.  It is to these graded modules that we generalize mesoprimary decomposition of binomial ideals in the subsequent sections of this paper.  

\begin{defn}\label{d:graded}
Fix a $Q$-module $T$ and a $\kk[Q]$-module $M$.  
\begin{itemize}
\item 
$M$ is \emph{graded by $T$} (or just \emph{$T$-graded}) if there exist a collection of finite dimensional vector spaces $\{M_t\}_{t \in T}$ such that $M \iso \bigoplus_{t \in T} M_t$ as Abelian groups, and for each $q \in Q$, ${\bf t}^q \cdot M_t \subset M_{q \cdot t}$.  

\item 
The grading of $M$ by $T$ is \emph{fine} (or $M$ is \emph{finely-graded by $T$}) if $\dim_\kk M_t \le 1$ for each $t \in T$.  

\item 
A fine grading of $M$ by $T$ is \emph{tight} (or $M$ is \emph{tightly-graded by $T$}) if
\begin{itemize}
\item $M_t \ne 0$ for each non-nil $t \in T$, 
\item the orbit of each $\infty \in N(T)$ with $M_\infty = 0$ is properly connected, and
\item whenever $m \in M_t$ is nonzero with ${\bf x}^q \cdot m = 0$, we have $\dim_\kk M_{q \cdot t} = 0$.  
\end{itemize}

\end{itemize}
\end{defn}

\begin{remark}\label{r:tight}
A tight grading of a $\kk[Q]$-module $M$ by a $Q$-module $T$ ensures that we can determine enough of the structure of $M$ from the grading.  The first condition ensures that $T$ does not have any unnecessary elements, and the second ensures each connected component has its own nil (see~Proposition~\ref{p:bmod}).  Example~\ref{e:tightcond} demonstrates what can cause the third condition to fail.  
\end{remark}

\begin{example}\label{e:tightcond}
The $\kk[x]$-module $M = \<x^2\> \oplus (\kk[x]/\<x^2\>)$ is finely graded by $\NN$, but since $x \cdot (0,x)$ is zero, this grading is not tight.  However, $M$ is tightly graded by the disjoint union of $\<2\> \subset \NN$ (which tightly grades $\<x^2\> \subset \kk[x]$) and $\NN/\<2\>$ (which tightly grades $\kk[x]/\<x^2\>$).  This grading more accurately reflects the algebraic structure of~$M$.  
\end{example}

In order to study finely graded $\kk[Q]$-modules, it suffices to consider tight gradings.  In particular, every tight grading is fine, and Theorem~\ref{t:tight} shows that a tight grading can be recovered from any fine grading by chosing an appropriate $Q$-module.  
% In the language of categories, Theorem~\ref{t:tight} shows that the category of finely graded $\kk[Q]$-modules is equivalent to the full subcategory of tightly graded $\kk[Q]$-modules (not true anymore).  

\begin{thm}\label{t:tight}
Fix a $\kk[Q]$-module $M$ finely graded by a $Q$-module $T$.  Then there exists a $Q$-module that tightly grades $M$.  
% and the identity map on $M$ is an isomorphism in $\cB_Q$ (not true).  
\end{thm}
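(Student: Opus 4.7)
The plan is to build $T'$ directly from $M$ by discarding graded pieces that vanish and inserting fresh nils to absorb the $Q$-actions that annihilate some $M_t$. Concretely, set $T^+ = \{t \in T : M_t \ne 0\}$ and form the undirected graph on $T^+$ whose edges are $\{t, q \cdot t\}$ for each pair $(t,q)$ with ${\bf x}^q \cdot M_t \ne 0$; fineness forces such an ${\bf x}^q$ to be an isomorphism $M_t \to M_{q \cdot t}$, so $q \cdot t$ automatically lies in $T^+$. Let $\{C_i\}$ be the connected components, and for each $C_i$ on which some ${\bf x}^q$ annihilates some $M_t$ adjoin a fresh absorbing element $\infty_i$. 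Take $T'$ to be the union, equipped with the $Q$-action $q \cdot \infty_i = \infty_i$ and
\[
q \cdot t \;=\; \begin{cases} q \cdot t & \text{if } {\bf x}^q \cdot M_t \ne 0, \\ \infty_i & \text{if } t \in C_i \text{ and } {\bf x}^q \cdot M_t = 0, \end{cases}
\]
and grade $M$ by $T'$ via $M_t^{T'} = M_t$ for $t \in T^+$ and $M_{\infty_i}^{T'} = 0$.

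That this rule defines a $Q$-module reduces to a short case analysis comparing $(q_1 + q_2) \cdot t$ with $q_1 \cdot (q_2 \cdot t)$: the crucial observation is that whenever $q_2 \cdot t$ lands back in $T^+$, it lies in the same component $C_i$ as $t$, so the two expressions agree regardless of whether ${\bf x}^{q_1}$ subsequently annihilates $M_{q_2 \cdot t}$. Fineness of the new grading is inherited from the original, and compatibility of the $\kk[Q]$-action with the $T'$-grading follows from the two cases in the definition. Conditions (1) and (3) of tightness are then essentially definitional: every non-nil of $T'$ lies in $T^+$, and any relation ${\bf x}^q \cdot m = 0$ for $m \ne 0$ in $M_t$ sends $t$ to $\infty_i$, which carries zero graded piece.

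The genuinely delicate check is condition (2), requiring that each adjoined nil $\infty_i$ have a properly connected $T'$-orbit. To verify it, I would apply Lemma~\ref{l:nilorbit} inside $T'$: the orbit of $\infty_i$ equals its basin, and since $\infty_i$ was inserted only when some element of $C_i$ is killed by some ${\bf x}^q$, the entire component $C_i$ must lie in that basin, so for every $t \in C_i$ some $q$ forces $q \cdot t = \infty_i$ in $T'$. This rules out any $t \in C_i$ being absorbing in $T'$, so $\infty_i$ is the only nil of its $T'$-orbit $C_i \cup \{\infty_i\}$; removing it returns $C_i$, still connected in the graph on $T^+$, hence a single $Q$-orbit. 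The main obstacle is precisely this bookkeeping: a nil of $T$ with $M_t \ne 0$ can persist in $T^+$ and threaten to produce a second nil in some orbit $C_i \cup \{\infty_i\}$, which would break proper connectedness; Lemma~\ref{l:nilorbit} applied to the new module $T'$ is exactly what rules this out.
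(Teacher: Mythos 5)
Your construction is correct and is essentially the paper's own proof: the paper likewise restricts to the degrees with nonzero graded piece, redirects every annihilating action to a fresh absorbing element, and then splits off one nil per $Q$-orbit, merely performing these two steps in the opposite order. Your verification of proper connectedness via Lemma~\ref{l:nilorbit} is a welcome elaboration of a point the paper leaves implicit, but it does not change the argument.
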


\begin{proof}
We construct the desired $Q$-module in two steps.  First, define a $Q$-module $T'$ that, as a set, consists of those $t \in T$ for which $\dim_\kk M_t = 1$, along with a distinguished element $\infty$.  Given $t \in T'$ and $q \in Q$, define $q \cdot t \in T'$ by
\begin{center}
$q \cdot t = \left\{\begin{array}{ll}
q \cdot t \in T & {\bf x}^q M_t \ne 0 \\
\infty & \text{otherwise}
\end{array}\right.,$
\end{center}
that is, the result of acting on $t$ by $q$ in $T$ if ${\bf x}^q M_t \ne 0$, and $\infty \in T'$ otherwise.  The $Q$-module $T'$ also finely grades $M$ since each nonzero $M_t$ for $t \in T$ has a corresponding degree in $T'$.  Moreover, the only degree $t \in T'$ with $\dim_\kk M_t = 0$ is $t = \infty$, and whenever ${\bf x}^q M_t = 0$, we  have $q \cdot t = \infty$.  In particular, $T'$ satisfies the first and third conditions for a tight grading in Definition~\ref{d:graded}.  

Next, let $T_1', \ldots, T_r'$ denote the distinct $Q$-orbits of $T' \setminus \{\infty\}$, and let $T''$ denote the the disjiont union of the sets $T_1', \ldots, T_r'$ together with distinguished elements $\infty_1, \ldots, \infty_r$.  Define a $Q$-module structure on $T''$ so that $\infty_i$ is nil for each $i \le r$, and 
\begin{center}
$q \cdot t = \left\{\begin{array}{ll}
\infty_i & q \cdot t = \infty \\
q \cdot t \in T_i' & \text{otherwise}
\end{array}\right.$
\end{center}
for $t \in T_i'$ and $q \in Q$.  Since the orbit of each nil $\infty_i$ of $T''$ with trivial support in $M$ is properly connected, $T''$ tightly grades $M$.  This completes the proof.  
\end{proof}

\begin{defn}\label{d:lhom}
Suppose $M$ and $N$ are $\kk[Q]$-modules, graded by $Q$-modules $T$ and $U$, respectively.  
\begin{itemize}
\item 
A homomorphism $\phi:M \to N$ is said to be \emph{graded with degree} $\psi \in \Hom_Q(T,U)$ if for each $t \in T$, we have $\phi(M_t) \subset N_{\psi(t)}$.  

\item 
Let $\underline{\Hom}_R(M,N)_\psi$ denote the set of morphisms $\phi:M \to N$ of degree $\psi$, and write 
$$\underline{\Hom}_R(M,N) = \bigoplus_{\psi \in \Hom_Q(T,U)} \underline{\Hom}_R(M,N)_\psi$$
for the set of \emph{graded homomorphisms} from $M$ to $N$.  $\underline{\Hom}_R(M,N)$ is naturally a $\Hom_Q(T,U)$-graded $\kk[Q]$-module with the action of ${\bf t}^q$ given by $({\bf t}^q \cdot \phi)(m) = \phi({\bf t}^q \cdot  m)$ for each $m \in M$, $q \in Q$.  

\item 
A homomorphism $\phi \in \underline{\Hom}_R(M,N)$ is \emph{homogeneous} if it is a sum of homomorphisms with homogeneous degree in $\Hom_Q(T,U)$.  

\end{itemize}
\end{defn}

\begin{defn}\label{d:bmod}
The \emph{category of tightly graded $\kk[Q]$-modules} is the category $\cB_Q$ whose objects are pairs $(M,T)$ consisting of a $Q$-module $T$ together with a $\kk[Q]$-module $M$ tightly graded by $T$, and whose morphisms are graded $\kk[Q]$-module homomorphisms.  When there is no confusion, we often write $M \in \cB_Q$ to denote the $\kk[Q]$ module and use $T_M$ to denote the $Q$-module which tightly grades $M$.  
\end{defn}

\begin{prop}\label{p:bmod}
The category $\cB_Q$ is closed under taking direct sums and tensor products.  More precisely, given two $\kk[Q]$-modules $M$ and $N$ tightly graded by $Q$-modules $T$ and $U$, respectively, the direct sum $M \oplus N$ is naturally graded by $T \oplus U$, and the tensor product $M \otimes_{\kk[Q]} N$ is naturally graded by $T \otimes_Q U$.  
\end{prop}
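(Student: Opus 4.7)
I would treat the direct sum and tensor product cases independently, in each case first exhibiting a natural grading on the resulting $\kk[Q]$-module and then verifying the tightness axioms of Definition~\ref{d:graded}.

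For the direct sum, I would equip $M \oplus N$ with the grading $(M \oplus N)_v = M_v$ when $v$ lies in the copy of $T$ inside $T \oplus U$ and $(M \oplus N)_v = N_v$ when $v \in U$. Since $T \oplus U$ is the disjoint union of $T$ and $U$ with componentwise $Q$-action, the $\kk[Q]$-action on $M \oplus N$ respects this grading by construction. Verifying tightness is then essentially bookkeeping: the nil set of $T \oplus U$ is the disjoint union $N(T) \sqcup N(U)$, every $Q$-orbit lies entirely in one summand, and any equality $\xx^q m = 0$ for a homogeneous $m$ takes place inside one of the two tightly graded modules. Each of the three conditions in Definition~\ref{d:graded} thus reduces to the corresponding condition on $M$ or $N$.

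For the tensor product, I would first grade $M \otimes_\kk N$ by $T \times U$ via $(M \otimes_\kk N)_{(t,u)} = M_t \otimes_\kk N_u$ and then pass to the quotient defining $M \otimes_{\kk[Q]} N$. The key observation is that each defining relation $\xx^q m \otimes n - m \otimes \xx^q n$ only involves elements in the two $T \times U$-degrees $(q \cdot t, u)$ and $(t, q \cdot u)$, and these are precisely the degrees identified by the equivalence relation defining $T \otimes_Q U$. Hence the grading descends, and for each $v \in T \otimes_Q U$ the component $(M \otimes_{\kk[Q]} N)_v$ is the common image of $M_t \otimes_\kk N_u$ in $M \otimes_{\kk[Q]} N$ for any representative $(t,u)$ of $v$. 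Since $\dim_\kk M_t \le 1$ and $\dim_\kk N_u \le 1$, this image is automatically at most one-dimensional, so the grading is fine.

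The main obstacle is verifying the tightness of the descended grading on $M \otimes_{\kk[Q]} N$, since the first and third conditions of Definition~\ref{d:graded} require understanding both when the $\kk[Q]$-bilinear relations collapse a priori nonzero pieces and which elements of $T \otimes_Q U$ are nil. I expect to verify that $v = t \otimes u$ is non-nil in $T \otimes_Q U$ exactly when it admits a representative $(t', u')$ with $t'$ non-nil in $T$ and $u'$ non-nil in $U$, so that the tight grading hypotheses on $M$ and $N$ produce a nonzero component at each non-nil degree. Should residual zero pieces persist in non-nil degrees of $T \otimes_Q U$, Theorem~\ref{t:tight} provides a safety net: the already-fine grading can be refined to a tight grading on a derived $Q$-module, placing $M \otimes_{\kk[Q]} N$ in $\cB_Q$ as the closedness assertion demands.
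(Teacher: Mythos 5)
Your proposal follows essentially the same route as the paper: the paper's proof likewise grades $M \oplus N$ by placing each summand in its own copy of $T$ or $U$ inside $T \oplus U$, and grades $M \otimes_{\kk[Q]} N$ by $\deg(m \otimes n) = t \otimes u$, the single verification being that the relation $({\bf t}^q \cdot m) \otimes n = m \otimes ({\bf t}^q \cdot n)$ is compatible with the identification $(q \cdot t) \otimes u \sim t \otimes (q \cdot u)$ defining $T \otimes_Q U$ --- exactly your key observation. Your additional care with the tightness axioms (and the fallback via Theorem~\ref{t:tight} for the tensor product) goes beyond the paper's own proof, which stops after exhibiting the degree maps and checking well-definedness.
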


\begin{proof}
The homogeneous elements of $M \oplus N$ have the form $(m,0), (0,n)$ for homogeneous $m \in M_t$, $n \in N_u$, and the degree map is given by $\deg(m,0) = t \in T \oplus U$, $\deg(0,n) = u \in T \oplus U$.  The homogeneous elements of $M \otimes_{\kk[Q]} N$ have the form $m \otimes n$ for homogeneous $m \in M_t$, $n \in N_u$, and the degree map is given by $\deg(m \otimes n) = t \otimes u$.  Notice that
$$\deg(m \otimes ({\bf t}^q \cdot n)) = t \otimes (q \cdot u) = (q \cdot t) \otimes u = \deg(({\bf t}^q \cdot m) \otimes n)$$
so this degree map is well defined.  
\end{proof}

%%%%%%%%%%%%%%%%%%%%%%%%%%%%%%%%%%%%%%%
\section{Mesoprimary $\kk[Q]$-modules}%
\label{s:binmodmesoprimary}%%%%%%%%%%%%
%%%%%%%%%%%%%%%%%%%%%%%%%%%%%%%%%%%%%%%

In this section, we define binomial submodules of tightly graded $\kk[Q]$-modules (Definition~\ref{d:binomialsubmod}), generalizing the concept of ``binomial ideal''.  We define mesoprimary binomial submodules (Definition~\ref{d:binmesoprimary}), which, like mesoprimary binomial ideals, are characterized by their unique associated mesoprime ideal (Theorem~\ref{t:binmesoprimary}).  
% , a direct generalization of the corresponding characterization of mesoprimary binomial ideals \cite[Proposition~12.10]{kmmeso}.  The concepts introduced in this section generalize those from of mesoprimary binomial ideals and associated mesoprime ideals.  

\begin{defn}\label{d:binomialsubmod}
Fix a tightly $T$-graded $\kk[Q]$-module $M$ and a nonzero element $m \in M$.  
\begin{itemize}
\item 
The element $m$ is a \emph{monomial} if it is homogeneous under the $T$-grading.  

\item 
The element $m$ is a \emph{binomial} if it is a sum of at most two monomial elements.  

\item 
A submodule $N \subset M$ is \emph{monomial} (resp. \emph{binomial}) if it is generated by monomial (resp. binomial) elements.  

\end{itemize}
\end{defn}

\begin{lemma}\label{l:binmodcong}
Fix a tightly $T$-graded $\kk[Q]$-module $M$, and a binomial submodule $N \subset M$.  Let $\til_N$ denote the equivalence relation on $T$ which sets $a \sim_N b$ whenever $m_a + m_b \in N$ for some nonzero $m_a \in M_a, m_b \in M_b$.  Then $\til_N$ is a congruence on $T$, and $\ol M = M/N$ is tightly graded by $\ol T = T/\til_N$.  
\end{lemma}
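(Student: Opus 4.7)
To establish that $\til_N$ is a congruence I would first verify the three equivalence-relation axioms directly from the generating condition, then show compatibility with the $Q$-action. Reflexivity on any $t\in T$ with $M_t\ne 0$ is witnessed by a nonzero $m\in M_t$ via $m+(-m)=0\in N$ (two nonzero summands whose sum lies in $N$), and reflexivity on the remaining degrees is automatic once one takes the equivalence closure. Symmetry is immediate. Transitivity uses the fine grading: given $m_a+m_b\in N$ and $m_b'+m_c\in N$ with $m_b,m_b'\in M_b$ both nonzero, write $m_b'=\lambda m_b$ for some $\lambda\in\kk^{*}$ (forced by $\dim_\kk M_b\le 1$), and then $\lambda m_a+(-m_c)=\lambda(m_a+m_b)-(m_b'+m_c)\in N$ realizes $a\sim_N c$ with both summands nonzero.

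For $Q$-action compatibility, I would fix a generating pair $a\sim_N b$ witnessed by $m_a+m_b\in N$ and apply $\xx^q$, obtaining $\xx^q m_a+\xx^q m_b\in N$. If both terms remain nonzero, the resulting binomial directly exhibits $q\cdot a\sim_N q\cdot b$. If $\xx^q m_a=0$, the third tight-grading condition forces $M_{q\cdot a}=0$, and then the first tight-grading condition forces $q\cdot a$ to be a nil in $T$; simultaneously $\xx^q m_b=\xx^q(m_a+m_b)\in N$, so either $M_{q\cdot b}\subseteq N$ (when $\xx^q m_b\ne 0$) or $M_{q\cdot b}=0$ (when $\xx^q m_b=0$), meaning both $q\cdot a$ and $q\cdot b$ map trivially into $\ol M$. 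To identify these degrees in $\til_N$, I would invoke the second tight-grading condition (proper connectedness of the orbit of a nil with empty fiber) together with the binomial relations supplied by the nonzero monomials of $N$ sitting inside this common nil locus.

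For the $\ol T$-grading of $\ol M$, I would set $\ol M_{\ol t}$ equal to the image under the projection $M\onto\ol M$ of $\bigoplus_{s\in\ol t}M_s$. Each such piece has $\dim_\kk\ol M_{\ol t}\le 1$ because, within a single $\til_N$-class, the $1$-dimensional summands $M_s$ are all identified up to nonzero scalar by chains of binomial relations from $N$. The main technical point --- and the step I expect to be the principal obstacle --- is the directness of $\ol M=\bigoplus_{\ol t}\ol M_{\ol t}$. This reduces to showing that every element of $N$ decomposes as a $\kk$-linear combination of homogeneous pieces each supported inside a single $\til_N$-class of $T$; the argument is a chain-of-binomials analysis analogous to the treatment of binomial ideals in \cite{kmmeso}, separating an element of $N$ into contributions per class and exploiting that two distinct $\til_N$-classes cannot cancel against one another inside $N$ without having been identified by $\til_N$ in the first place. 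Once this is established, the three conditions for tightness of the $\ol T$-grading on $\ol M$ follow routinely: $\ol M_{\ol t}\ne 0$ for non-nil $\ol t$ comes from the first tight-grading condition of $M$ together with the definition of $\til_N$; proper connectedness of nil orbits is inherited since $\til_N$ only merges orbits rather than splitting them; and the annihilation condition follows by lifting $\xx^q\cdot\ol m=0$ to $\xx^q m\in N$ and using that $M_{q\cdot s}$ is at most $1$-dimensional.
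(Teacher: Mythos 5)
Your outline is the same as the paper's (which in fact dismisses both the congruence claim and the fine‐grading claim as ``clear'' and only checks the three tightness conditions), and your transitivity argument via $\dim_\kk M_b\le 1$ is exactly the right mechanism. The genuine gap is in your treatment of compatibility with the $Q$-action in the degenerate case $\xx^q m_a=0$. There $M_{q\cdot a}=0$, so there exists \emph{no} nonzero element of $M_{q\cdot a}$ and the defining condition for $q\cdot a\sim_N q\cdot b$ can never be satisfied literally; neither of the tools you invoke repairs this. Proper connectedness is a statement about the orbit of the nil $q\cdot a$ inside $T$ and says nothing about $q\cdot b$, which may lie in a different orbit entirely, and the ``binomial relations supplied by nonzero monomials of $N$'' never reach a degree whose fiber is the zero vector space. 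Concretely: let $Q=\NN$, $M=\kk[x]/\<x^2\>\oplus\kk[x]$ with generators $e_1,e_2$, tightly graded by $T=\{0_1,1_1,\infty_1\}\sqcup\{0_2,1_2,2_2,\dots\}$, and $N=\<e_1-e_2\>$. Then $xe_1-xe_2\in N$ gives $1_1\sim_N 1_2$, but acting by $1\in\NN$ sends $1_1$ to $\infty_1$ (empty fiber) and $1_2$ to $2_2$ (whose fiber $\kk x^2e_2$ lies in $N$), and no binomial of $N$ and no orbit of $T$ relates $\infty_1$ to $2_2$. The correct repair is to build into $\til_N$ the identification of \emph{all} degrees $t$ with $M_t\subseteq N$ (including those with $M_t=0$) into a single class --- equivalently, to take the congruence generated by the stated relations; with that convention your case analysis does close, since in the bad case both $M_{q\cdot a}$ and $M_{q\cdot b}$ land in this common class.

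Your second flagged point --- that $\ol M=\bigoplus_{\ol t}\ol M_{\ol t}$ requires every element of $N$ to split into $\til_N$-class-homogeneous pieces each again lying in $N$ --- is correctly identified but only gestured at. It is not an obstacle once the congruence property is in hand: every element of $N$ is a $\kk$-linear combination of monomial multiples $\xx^q(m_a+m_b)$ of the binomial generators, each of which is supported in the single class of $q\cdot a\sim_N q\cdot b$, so collecting terms by class keeps you inside $N$. You should write this out (and note that it \emph{uses} the congruence property, so the order of the two verifications matters), but it is the standard Eisenbud--Sturmfels/Kahle--Miller argument rather than a missing idea. The remaining tightness checks in your last paragraph are fine.
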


\begin{proof}
It is clear that $\til_N$ is a congruence on $T$, and that $\ol T$ finely grades $\ol M$.  If $\ol t \in \ol T$ is non-nil, then each representative $t \in T$ for $\ol t$ is non-nil, meaning $\dim_\kk \ol M_\ol t = 1$.  Additionally, if ${\bf x}^q \cdot \ol m = 0$ for some nonzero $\ol m \in \ol M_\ol t$, then any nonzero $m \in M$ whose image in $\ol M$ equals $\ol m$ satisfies ${\bf x}^q \cdot m = 0$.  Since $T$ tightly grades $M$, this means $\dim_\kk M_{q \cdot t} = 0$, so $\dim_\kk \ol M_{q \cdot \ol t} = 0$.  Lastly, if $\ol t \in \ol T$ is nil and $\dim_\kk \ol M_\ol t = 0$, then each representative $t \in T$ of $\ol t$ is nil and satsfies $\dim_\kk M_t = 0$.  As such, $N$ cannot contain any nonzero binomials whose monomials have image of degree $\ol t$ in $\ol M$, so $\ol T$ tightly grades $\ol M$, as desired.  
% whenever $m \in M_t$ is nonzero with ${\bf x}^q \cdot m = 0$, we have $\dim_\kk M_{q \cdot t} = 0$
\end{proof}

\begin{defn}\label{d:binmesoprimary}
A tightly $T$-graded $\kk[Q]$-module $M$ is \emph{mesoprimary} if $T$ is a mesoprimary $Q$-module and $M_\infty = 0$ for each nil $\infty \in T$.  A binomial submodule $N \subset M$ is \emph{mesoprimary} if $M/N$ is a mesoprimary $\kk[Q]$-module.  
\end{defn}

\begin{example}\label{e:mesoprimaryideal}
If $I \subset \kk[Q]$ is a binomial ideal, then $\kk[Q]/I$ is tightly $T$-graded for $T = Q/\til_I$.  Moreover, $\kk[Q]/I$ is mesoprimary when $T$ is mesoprimary and $I$ is maximal among binomial ideals inducing the congruence $\til_I$.  By Lemma~\ref{l:primarymonoidcase}, this is precisely when $I$ is mesoprimary; see \cite[Definition~10.4]{kmmeso}.  
\end{example}

% \begin{example}
% The same $\kk[Q]$-module may be graded by different $Q$-modules.  
% Let $I = \<x-1\>$, $M = (\kk[Q]/I)^{\oplus 2}$ and $T = Q/\til_I$.  
% $M$ can be graded by $T$, with $\dim_\kk M_0 = 2$, 
% or by $T \oplus T$, where each graded degree has vector space dimension 1.  

% This can happen even when $M$ is simple.  Let $I = \<x^2 - 1\>$, $J = \<x-1\>$.  
% $\kk[Q]/I$ can be graded by $T = Q/\til_J$, with $\dim_\kk M_0 = 2$.  
% More generally, whenever an $\kk[Q]$-module $M$ is graded by $T$ and 
% there is a $Q$-module homomorphism $T \lra T'$, then 
% $M$ is also graded by $T'$.  
% \end{example}

Definition~\ref{d:assocmesoprime} generalizes \cite[Definition~12.1]{kmmeso}.  

\begin{defn}\label{d:assocmesoprime}
Fix a tightly $T$-graded $\kk[Q]$-module $M$, and a binomial submodule $N \subset M$.  Fix a monoid prime $P \subset Q$, and let $G$ denote the unit group of $Q_P$.  

\begin{itemize}

\item 
The \emph{monomial localization of $M$ at $P$}, denoted $M_P$, is the $\kk[Q]_P$-module obtained by adjoining to $M$ the inverses of all monomials outside of the monomial ideal $\mm_P = \<{\bf x}^p : p \in P\>$.  

\item 
An element $w \in T$ is an \emph{$N$-witness} for $P$ if $w$ is a $\til_N$-witness for $P$ on $T$, and $w$ is \emph{essential} if a nonzero element of $M_w$ is minimal  (under Green's preorder) among the monomials of some element $m \in M$ annihilated by $\mm_P$ in $M_P/N_P$.  A nonzero monomial $m_w \in M_w$ is called a \emph{monomial $P$-witness} for $N$.  

\item
Fix a monomial $N$-witness $m \in M_w$ for $P$.  The \emph{stabalizer} of $w$ along a prime $P \subset Q$ is the subgroup $K_w^P \subset G_P$ fixing the class of $w$ in $T_P$.  The \emph{character at $m$} is the homomorphism $\rho:K_w^P \to \kk^*$ such that $(I_{\rho,P})_P = \ann(\ol m) + \mm_P$, where $\ol m$ denotes the image of $m$ in $M_P$.  The \emph{$P$-mesoprime of $M$ at $m$} is the mesoprime ideal $I_{\rho,P}$.  
% is the kernel of the composition 
% $$\kk[Q]_P \overlra{\cdot \ol m} M_P \lra M_P/(() \cdot \ol m)$$

\item 
If $m \in M_w$ is an essential $N$-witness for $P$, we say the mesoprime $I_{\rho,P}$ is \emph{associated to $N$}, and $m$ is an \emph{$N$-witness for $I_{\rho,P}$}.  

\end{itemize}
% For a binomial submodule $N \subset M$, a mesoprime $I_{\rho,P} \subset \kk[Q]$ is \emph{associated to $N$} if it is associated to $M/N$, and 
% an element $m_w \in M_w$ is an \emph{$M$-witness for $N$} if its image in $M/N$ is an $(M/N)$-witness for $I_{\rho,P}$.  
\end{defn}

Proposition~\ref{p:finiteessential} generalizes \cite[Lemma~12.4]{kmmeso}, and can be proven using a similar argument to Theorem~\ref{t:finitekey}.  

\begin{prop}\label{p:finiteessential}
Any binomial submodule of a tightly graded $\kk[Q]$-module has finitely many essential witnesses.  
\end{prop}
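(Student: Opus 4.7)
The plan is to parallel the proof of Theorem~\ref{t:finitekey}: reduce the module problem to finitely many binomial-ideal problems by passing to cyclic submodules, and then invoke the binomial-ideal version of the finiteness statement, \cite[Lemma~12.4]{kmmeso}.

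Since $M$ is finitely generated and tightly graded, one can pick homogeneous generators $m_1, \ldots, m_k \in M$ with $w_i = \deg(m_i) \in T$, and for each $i$ consider the surjection $\phi_i : \kk[Q] \onto \kk[Q] m_i$ sending $1 \mapsto m_i$. The preimage $I_i := \phi_i^{-1}(N \cap \kk[Q] m_i)$ is a binomial ideal of $\kk[Q]$: indeed, $\kk[Q] m_i / (N \cap \kk[Q] m_i)$ inherits a tight grading from $M/N$ by the sub-$Q$-module of $T/\til_N$ generated by the image of $w_i$, and being a cyclic $\kk[Q]$-module whose grading congruence is a quotient of $Q$, it identifies with $\kk[Q]/I_i$ for a binomial $I_i$ (whose generators are the monomials attached to the nil classes and the binomials recording the scalar relations on the non-nil classes of the grading). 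The central step is then the correspondence: every essential $N$-witness $w$ for a prime $P$ must be an essential $I_i$-witness for $P$ for at least one $i$. Given $m \in M_w$ nonzero and minimal among the monomials of some $n \in M$ with $\mm_P \cdot n \subset N_P$, expand $n = \sum_i r_i m_i$; by tightness of the grading, $w = q \cdot w_{i_0}$ for some $q \in Q$ and some $i_0$, and $m$ is a scalar multiple of the degree-$w$ monomial of $r_{i_0} m_{i_0}$, so the monomial $x^q \in \kk[Q]$ is an essential $I_{i_0}$-witness in the sense of \cite[Definition~12.1]{kmmeso}.

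Summing the finite bounds supplied by \cite[Lemma~12.4]{kmmeso} for each $I_i$ over $i = 1, \ldots, k$ then yields the desired finite total. The main obstacle is verifying the descent of essentiality: since the condition $\mm_P \cdot n \subset N_P$ may genuinely mix several summands $r_j m_j$, it is delicate to confirm that minimality of the degree of $m$ in $M_P/N_P$ forces the corresponding minimality of $x^q$ in $(\kk[Q]/I_{i_0})_P$. Tightness of the grading is precisely the hypothesis that makes this descent work: the fact that distinct non-nil degrees in $T$ each support a one-dimensional space in $M$ prevents spurious cancellations between summands at the level of degrees, so the minimality condition descends cleanly to each cyclic component.
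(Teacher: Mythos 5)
Your overall strategy --- reduce to the cyclic submodules $\kk[Q]m_i$ and invoke \cite[Lemma~12.4]{kmmeso} --- is exactly the reduction the paper intends (it gives no proof beyond the remark that the argument is ``similar to Theorem~\ref{t:finitekey}''), and your first step is sound: each cyclic quotient is finely graded by a quotient of $Q$ with one-dimensional pieces, so each $I_i$ is indeed a binomial ideal. The gap is precisely at the step you flag, and tightness does not close it. The correspondence ``every essential $N$-witness is an essential $I_i$-witness for some $i$'' is false in general: the element $n=\sum_i r_im_i$ certifying essentiality may be annihilated by $\mm_P$ modulo $N_P$ only because binomials of $N$ cancel monomials of $r_{i_0}m_{i_0}$ against monomials of $r_jm_j$ with $j\ne i_0$. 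After restricting to the single component $\kk[Q]m_{i_0}$ that cancellation is unavailable, so $r_{i_0}$ need not be annihilated by $\mm_P$ modulo $(I_{i_0})_P$, and ${\bf x}^q$ need not be an essential $I_{i_0}$-witness. Tightness only controls the dimensions of the graded pieces of $M$; it says nothing about whether the annihilation condition survives projection to one summand.

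Concretely, take $Q=\NN$, $M=\kk[x]/\<x^2\>\,e_1\oplus\kk[x]/\<x^2\>\,e_2$ with its tight grading by two disjoint copies of $\{0,1,\infty\}$, and $N=\<xe_1-xe_2\>$. For $P=\NN_{>0}$ the element $n=e_1-e_2$ satisfies $\mm_P\cdot n\subset N$, and its two monomials sit in Green's-incomparable degrees, so both $\deg e_1$ and $\deg e_2$ are essential $N$-witnesses. Yet $N\cap\kk[Q]e_1=0$, so $I_1=\<x^2\>$, and no $f\in\kk[x]$ with $xf\in\<x^2\>$ has a nonzero constant term; hence degree $0$ is not an essential $I_1$-witness, and the essential witness $\deg e_1$ is invisible to every cyclic component. (The proposition is of course true in this example, since $T$ is finite, but your argument does not establish it.) A repair must handle elements straddling several components --- for instance by working directly with the finitely generated $\kk[Q]_P$-module $\{m\in M_P/N_P:\mm_P\cdot m=0\}$ and the finitely many $G_P$-orbits of degrees it occupies, rather than arguing one cyclic summand at a time.
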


Theorem~\ref{t:binmesoprimary} generalizes \cite[Proposition~12.10]{kmmeso}, and may fail to hold if the grading $Q$-module is not properly connected; see Example~\ref{e:binmesoprimarypropcon}.  

\begin{thm}\label{t:binmesoprimary}
Fix a tightly $T$-graded $\kk[Q]$-module $M$ with $T$ properly connected, and fix a binomial submodule $N \subset M$.  Then $N$ is mesoprimary if and only if $N$ has exactly one associated mesoprime.  
\end{thm}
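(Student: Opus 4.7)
The plan is to separate the monoid-module component of mesoprimariness (controlled by Theorem~\ref{t:qmodmesoprimary}) from the character component, and to handle each direction of the equivalence in turn; proper connectedness of $T$ will be the mechanism that transports characters between witnesses.

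For the forward direction, assume $N$ is mesoprimary. Then Definition~\ref{d:binmesoprimary} forces $T/\til_N$ to be mesoprimary as a $Q$-module, and Theorem~\ref{t:qmodmesoprimary} supplies a unique associated prime congruence, so every essential $N$-witness shares one prime $P$ and one stabilizer $K = K_w^P$. The remaining task is to show that the character is also uniquely determined: proper connectedness of $T$ descends to $T/\til_N$, so for any two essential monomial witnesses $m_1 \in M_{w_1}$ and $m_2 \in M_{w_2}$, some $a_1, a_2 \in Q \minus P$ satisfy $a_1 \cdot w_1 \sim_N a_2 \cdot w_2$. Nonzero scalar multiples of $\xx^{a_1} m_1$ and $\xx^{a_2} m_2$ then agree in $M_P/N_P$, and commuting any $g \in G_P$ that fixes this common degree past $\xx^{a_1}$ and $\xx^{a_2}$ forces the characters at $m_1$ and $m_2$ to coincide, giving uniqueness of the associated mesoprime.

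For the reverse direction, assume $N$ has exactly one associated mesoprime $I_{\rho,P}$, and verify both requirements of Definition~\ref{d:binmesoprimary} for $M/N$. To show $T/\til_N$ is mesoprimary, I argue that every key $\til_N$-witness $w$ for a prime $P'$ lifts to an essential $N$-witness: the one-dimensionality of $(M/N)_w$ given by Lemma~\ref{l:binmodcong} combined with a minimality argument along Green's preorder (as in Definition~\ref{d:assocmesoprime}) yields a monomial witness, whence uniqueness of $I_{\rho,P}$ forces $P' = P$ and identifies its stabilizer with $K$, so Theorem~\ref{t:qmodmesoprimary} delivers mesoprimariness of $T/\til_N$. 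For the nil-vanishing $(M/N)_\infty = 0$, any nonzero $m \in (M/N)_\infty$ would generate a one-dimensional $\kk[Q]$-submodule on which $Q$ acts by a character $\chi$, and this character data, combined with proper connectedness, produces mesoprime data incompatible with $I_{\rho,P}$, a contradiction.

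The crux of the argument is the realization step in the reverse direction: lifting a key $\til_N$-witness on $T/\til_N$ to an essential $N$-witness in $M$. This is exactly where proper connectedness of $T$ becomes indispensable (cf.\ Example~\ref{e:binmesoprimarypropcon}), since without it the characters at witnesses in different $Q$-orbits vary freely and no single mesoprime can account for all of them. Adapting the argument of \cite[Proposition~12.10]{kmmeso} from binomial ideals to binomial submodules, keeping careful track of how the monoid-module grading interacts with the binomial structure on $M$, will form the technical core of the proof.
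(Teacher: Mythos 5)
Your proposal follows essentially the same route as the paper: both directions reduce the monoid-module part to Theorem~\ref{t:qmodmesoprimary}, transport characters between witnesses via the $Q$-action together with proper connectedness, and rule out nonzero nil-degree graded pieces by comparing the resulting mesoprimes. One small correction to your closing commentary: Example~\ref{e:binmesoprimarypropcon} exhibits a \emph{mesoprimary} module with two associated mesoprimes, so proper connectedness is indispensable for the direction ``mesoprimary $\Rightarrow$ unique mesoprime'' (your forward direction, where you do in fact use it), not for the witness-realization step in the reverse direction.
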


\begin{proof}
Let $\ol M = M/N$ and $\ol T = T/\til_N$.  First, suppose $N$ has exactly one associated mesoprime $I_{\rho,P}$.  This means the $P$-prime congruences agree at all $N$-witnesses, so $\ol T$ has exactly one associated prime congruence, and thus is mesoprimary.  Furthermore, if $\ol M_\infty \ne 0$ for some nil $\infty \in \ol T$, then the associated mesoprime at any nonzero element of $\ol M_\infty$ differs from the associated mesoprime in any non-nil degree $t \in T$ with $q \cdot \ol t = \infty$ for some $q \in Q$.  

Next, suppose $N$ is mesoprimary.  Fix $N$-witnesses $w, w' \in T$ with associated mesoprimes $I_{\rho,P}$ and $I_{\rho',P}$, respectively, and suppose $w = q \cdot w'$.  Since $T$ has exactly one associated prime congruence, multiplication by ${\bf t}^q$ induces an isomorphism $I_{\rho,P} \to I_{\rho',P}$, that is, the associated mesoprimes at $w$ and $w'$ coincide.  Since $T$ is properly connected, this shows that the associated mesoprimes at every $M$-witness coincide.  
\end{proof}

\begin{example}\label{e:binmesoprimarypropcon}
Resuming notation from Theorem~\ref{t:binmesoprimary}, the result may fail to hold in general if $T$ is not properly connected.  Let $I = \<x - 1, y\>, J = \<x - 2, y\> \subset \kk[x,y]$, $M = \kk[x,y]/I \oplus \kk[x,y]/J$, and $T = Q/\til_I \oplus Q/\til_J$ with nils identified.  Even though $M$ is mesoprimary, it has two distinct associated mesoprimes, one for each connected component of $Q/\til_I \oplus Q/\til_J$.  
\end{example}

%%%%%%%%%%%%%%%%%%%%%%%%%%%%%%%%%%%%%%%%%%%%%%%%%%%%%%%%%%%
\section{Mesoprimary decomposition of binomial submodules}%
\label{s:binmesodecomp}%%%%%%%%%%%%%%%%%%%%%%%%%%%%%%%%%%%%
%%%%%%%%%%%%%%%%%%%%%%%%%%%%%%%%%%%%%%%%%%%%%%%%%%%%%%%%%%%

In this section, we use mesoprimary submodules (Definition~\ref{d:binmesoprimary}) to construct a mesoprimary decomposition of any binomial submodule of a tightly graded $\kk[Q]$-module (Theorem~\ref{t:binmesodecomp}), thus completing our answer to Problem~\ref{pr:kmbinmod}.  

\begin{defn}\label{d:bincogen}
Fix a tightly $T$-graded $\kk[Q]$-module $M$, a binomial submodule $N \subset M$, a prime $P \subset Q$, and a monomial $N$-witness $m_w \in M_w$ for $P$.  
% Let $I_{\rho,P}$ denote the mesoprimary submodule 

\begin{itemize}

\item 
The \emph{monomial submodule cogenerated by $w$ along $P$} is the submodule $M_w^P(N) \subset M$ generated in those degrees $u \in T$ that lie outside of the order ideal $T_{\le w}^P$ cogenerated by $w$ along $P$ under the congruence $\til_N$.  

\item 
The \emph{$P$-mesoprime component of $N$ cogenerated by $w$} is the preimage $W_w^P(I)$ in $M$ of the submodule $N_P + C_w^P(N) + M_w^P(N) \subset M_P$, where 
$$C_w^P(N) = \<m_a - m_b : {\bf x}^q (m_a - m_b) \in M_w\>.$$
% Notice that $I_{\rho,P}M_P \subset W_w^P(I)$.  

\end{itemize}
\end{defn}

\begin{example}\label{e:bincogen}
Resume notation from Definition~\ref{d:bincogen}.  If $M = \kk[Q]$ and $N = I$ is a binomial ideal, then $C_w^P(I) = I_{\rho,P} \subset M$, so Definition~\ref{d:bincogen} is equivalent to \cite[Definition~12.13]{kmmeso} in this case.  In general, we have $I_{\rho,P}M_P \subset C_w^P(I)$, but equality need not hold.  Let $Q = \NN^2$, $M = (\kk[x,y]/\<y - xy, y^2\>)^{\oplus 2}$, and $T$ the $Q$-module that tightly grades $M$.  Write $e_1$ and $e_2$ for the generators of the summands of $M$, and let $N = \<ye_1 - ye_2\>$, $w = ye_1$.  Then the associated mesoprime at $w$ is $I_{\rho,P} = \<1 - x, y\>$ and $W_w^P(N) = \<e_1 - e_2, e_1(1 - x), e_2(1 - x)\>$.  Here, $W_w^P(N)$ must contain the binomial $e_1 - e_2$ in order to induce the desired coprincipal congruence on $T$, and this is not captured in the combinatorial data of $I_{\rho,P}$ alone.  
\end{example}

\begin{prop}\label{p:bincogen}
Fix a tightly $T$-graded $\kk[Q]$-module $M$, a binomial submodule $N \subset M$, a prime $P \subset Q$, and an $N$-witness $w \in T$ for $P$.  The submodule $W_w^P(N)$ is mesoprimary with associated mesoprime $I_{\rho,P}$.  In particular, if $N$ induces the congruence $\til$ on $T$, then $W_w^P(N)$ induces the coprincipal congruence $\til_w^P$.  
\end{prop}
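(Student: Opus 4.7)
The plan is to first verify the ``in particular'' claim—that $W_w^P(N)$ induces the congruence $\til_w^P$ on $T$—and then deduce the mesoprimary property via Lemmas~\ref{l:qmodcoprincomp} and~\ref{l:binmodcong}. The associated mesoprime is identified last, by tracking the annihilator of the image of $m_w$ in the localized quotient.

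First I would analyze how each of the three summands comprising $W_w^P(N)$ in $M_P$ contributes to the induced congruence on $T$. The summand $M_w^P(N)$ forces every degree outside the order ideal $T_{\le w}^P$ into a single nil class. The summand $N_P$ preserves exactly those identifications already present in $\til_N$ after monomial localization at $P$. The summand $C_w^P(N)$ identifies two monomials $m_a \in M_a$ and $m_b \in M_b$ precisely when some $\mathbf{x}^q$ sends $m_a - m_b$ into $M_w$; at the level of degrees this is the condition $q \cdot a = q \cdot b = w$ (up to a unit) from Definition~\ref{d:qmodcoprincipal}. Combining the three contributions yields exactly $\til_w^P$.

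Once this is established, Lemma~\ref{l:qmodcoprincomp} gives that $T/\til_w^P$ is coprincipal, hence mesoprimary, and Lemma~\ref{l:binmodcong} ensures that $\ol M = M/W_w^P(N)$ is tightly graded by $T/\til_w^P$. Since $M_w^P(N) \subset W_w^P(N)$ annihilates everything in degrees outside $T_{\le w}^P$, the nil classes of $T/\til_w^P$ all have trivial support in $\ol M$, verifying Definition~\ref{d:binmesoprimary}. For the associated mesoprime, I would check that the image $\ol{m_w}$ remains an essential $W_w^P(N)$-witness for $P$ (the stabilizer $K_w^P$ depends only on the class of $w$ in $T_P$, which is unchanged). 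By construction the character of $W_w^P(N)$ at $\ol{m_w}$ must agree with $\rho$: every generator of $C_w^P(N)$ whose differences survive in the Green's class of $w$ is already forced by $\ann(\ol{m_w}) + \mm_P$ in $M_P/N_P$, while differences landing outside that class are absorbed into $M_w^P(N)$. Hence the associated mesoprime is $I_{\rho,P}$, and by Theorem~\ref{t:binmesoprimary} (applied to the properly connected $T/\til_w^P$), it is the unique associated mesoprime of $W_w^P(N)$.

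The main obstacle will be the bookkeeping in the first step: showing that $C_w^P(N)$ contributes exactly the identifications needed for the coprincipal congruence, with neither too few (which would break the partly cancellative condition) nor too many (which would refine $\til_w^P$). The subtlety highlighted in Example~\ref{e:bincogen}—that $C_w^P(N)$ may contain binomials not captured by $I_{\rho,P}$—means one must reason directly from the defining relation $\mathbf{x}^q(m_a - m_b) \in M_w$ rather than passing through the mesoprime ideal, and must carefully use tightness of the grading to rule out spurious identifications arising from $m_w$ being a one-dimensional generator of $M_w$.
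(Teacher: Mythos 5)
Your overall route matches the paper's: show that $W_w^P(N)$ induces exactly the coprincipal congruence $\til_w^P$ by analyzing the three summands $N_P$, $C_w^P(N)$, and $M_w^P(N)$, then invoke Lemma~\ref{l:qmodcoprincomp} (coprincipality of $\til_w^P$) and Lemma~\ref{l:binmodcong} (tightness of the quotient grading) to conclude mesoprimariness. The wrapper steps you supply---the nil classes having trivial support because $M_w^P(N)$ contains every graded piece outside $T_{\preceq w}^P$, and reading off the mesoprime from the annihilator of the image of $m_w$---are consistent with what the paper leaves implicit.

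The gap is precisely the step you defer as ``the main obstacle,'' and it is the only place where the paper's proof does real work, so it cannot be left as bookkeeping. The danger is not just that $C_w^P(N)$ might relate the wrong degrees; it is that $N_P$ and $C_w^P(N)$ might identify the \emph{same} pair of non-nil degrees $a \approx b$ via binomials with \emph{different} coefficient ratios, in which case their difference would be a nonzero monomial in $W_w^P(N)$ and a non-nil class of $\til_w^P$ would collapse to nil in the induced congruence. The observation that closes this: if $a, b \in T_{\preceq w}^P$ are non-nil and $m_a - m_b \in N$, choose $q$ so that $q \cdot a$ and $q \cdot b$ become Green's equivalent to $w$ in $T_P$; then $\mathbf{x}^q m_a - \mathbf{x}^q m_b \in M_w \cap N$, and since $w$ is a witness---hence non-nil modulo $\til_N$, so $M_w$ survives in $M/N$---this forces $\mathbf{x}^q m_a - \mathbf{x}^q m_b = 0$. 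The defining condition of $C_w^P(N)$ then makes $m_a - m_b$ itself a generator of $C_w^P(N)$, with its original coefficients, so every relation contributed by $N_P$ inside the order ideal is literally contained in $C_w^P(N)$ and no spurious monomial is produced. Without this argument your sentence ``combining the three contributions yields exactly $\til_w^P$'' is an assertion, not a proof.
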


\begin{proof}
Let $\app$ denote the congruence induced by $W_w^P(N)$.  
% Since $W_w^P(N)$ is obtained from the localization along $P$, we may as well assume $Q = Q_P$.  
We can see from the definitions that $\til_w^P$ refines $\app$, so it remains to show that no further relations are added.  Since the congruences induced by $N_P$ and $C_w^P(N)$ both refine $\til_w^P$, it suffices to show that the nil class of $\app$ is identical to that of $\til_w^P$.  That is, we must check that whenever $a \sim b$ and $a \approx b$ for non-nil $a, b \in T$, these relations are induced by same binomial elements in $N_P$ and $C_w^P(N)$.  Suppose $m_a - m_b \in N$ for nonzero $m_a \in M_a$, $m_b \in M_b$ such that $a \approx b$ but $m_a, m_b \notin M_w^P(N)$.  Since $a, b \in T_{\preceq w}^P(\til)$, we can find $q \in Q$ so that $q \cdot a$ and $q \cdot b$ are Green's equivalent to $w$ in $T_P$.  This means ${\bf x}^q m_a - {\bf x}^q m_b \in M_w \cap N$, and since $w$ is not in the nil class of $\til$, we must have ${\bf x}^q m_a - {\bf x}^q m_b = 0$.  In particular, this means $m_a - m_b \in C_w^P(N)$, as desired.  
\end{proof}

\begin{thm}\label{t:binmesodecomp}
Any binomial submodule $N$ of a tightly $T$-graded $\kk[Q]$-module $M$ equals the intersection of the coprincipal components cogenerated by its essential witnesses.  
\end{thm}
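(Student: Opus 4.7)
The containment $N \subseteq \bigcap_w W_w^P(N)$ is immediate: by Definition~\ref{d:bincogen} each $W_w^P(N)$ is the preimage in $M$ of a submodule of $M_P$ containing $N_P$, so $N \subseteq W_w^P(N)$ for every essential $N$-witness $w$.

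For the reverse inclusion, set $N' = \bigcap_w W_w^P(N)$ over all essential $N$-witnesses. The plan is to show $N' = N$ in two stages: first, compare the induced congruences $\til_N$ and $\til_{N'}$ on $T$; second, match the scalar data of binomial relations. The inclusion $N \subseteq N'$ gives that $\til_N$ refines $\til_{N'}$. Conversely, by Proposition~\ref{p:bincogen} each $W_w^P(N)$ induces the coprincipal congruence $\til_w^P$, so every binomial relation in $N'$ appears in each $\til_w^P$; hence $\til_{N'}$ is refined by the common refinement of the $\til_w^P$ over essential witnesses. I then invoke Theorem~\ref{t:qmodmesodecomp} applied to $\til_N$, which shows that the common refinement of coprincipal congruences over all key $\til_N$-witnesses identifies only nil elements of $T/\til_N$. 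The crucial matching step is to verify that the coprincipal components at essential $N$-witnesses together achieve the same separation, by producing for each key $\til_N$-witness an essential $N$-witness with the same separating effect; this is done by lifting combinatorial witness data on $T$ to a monomial $N$-witness in $M$ via the tight grading.

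Once $\til_{N'} = \til_N$ is established, suppose $m_a - c m_b \in N'$ for some scalar $c$, with $a,b \in T$ whose images in $T/\til_N$ are non-nil. Since $a \til_N b$, the 1-dimensionality forced by the tight grading produces a unique scalar $c_0$ with $m_a - c_0 m_b \in N \subseteq N'$. Subtracting gives $(c_0 - c) m_b \in N'$, so if $c \ne c_0$ then $m_b \in N'$; but then the tight grading on $M/N'$ would force $b$ to be nil in $\til_{N'}$, contradicting the agreement of the two congruences. Hence $c = c_0$ and every binomial generator of $N'$ already lies in $N$, giving $N' = N$ as desired.

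The main obstacle is the matching step. Essential $N$-witnesses are defined via a minimality condition on monomials annihilated by $\mm_P$ in $M_P/N_P$, which depends on the module structure of $M$, whereas key $\til_N$-witnesses are purely combinatorial on the grading $Q$-module $T$. Bridging this gap requires carefully tracking how the annihilator conditions in the localized quotient reflect the kernel congruences appearing in Definition~\ref{d:qmodwitness}, using Proposition~\ref{p:finiteessential} to guarantee that only finitely many essential witnesses are needed and that these witnesses exhaust the separating data provided by the key $\til_N$-witnesses.
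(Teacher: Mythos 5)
Your proposal has the right containment $N \subseteq \bigcap_w W_w^P(N)$ and a plausible two-stage outline, but it leaves the actual content of the theorem unproven, and its final step has a structural flaw.

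The first gap is the one you yourself flag as ``the main obstacle'': the matching between key $\til_N$-witnesses on $T$ (a purely combinatorial notion from Definition~\ref{d:qmodwitness}) and essential $N$-witnesses (a module-theoretic notion involving minimality among monomials of elements annihilated by $\mm_P$ in $M_P/N_P$). You assert this matching ``is done by lifting combinatorial witness data on $T$ to a monomial $N$-witness in $M$ via the tight grading,'' but no such lifting is exhibited, and it is not clear one exists in the direction you need: the theorem intersects only over \emph{essential} witnesses, which are a priori a restricted subclass, so you would have to show that for every pair $a \ne b$ separated by some key-witness component there is an \emph{essential} witness whose component also separates them. That is precisely the theorem's content, so asserting it is circular. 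The paper sidesteps this entirely by arguing element-by-element in $M$ rather than congruence-by-congruence on $T$: given $m \notin N$, localize at a prime $P$ where the image of $m$ stays outside $N_P$ (reducing to $P$ maximal via injectivity of localization of the coprincipal components), replace $m$ by a monomial multiple annihilated by $\mm_P$ modulo $N$, and observe that a Green's-minimal monomial of $m$ then sits, \emph{by the very definition of essential witness}, in a degree $w$ that is an essential witness; Proposition~\ref{p:bincogen} then shows $m \notin W_w^P(N)$. The definition of essential witness is tailored to make exactly this step work, which is why the paper never needs your matching lemma.

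The second gap is in your scalar-matching stage. You conclude by checking that ``every binomial generator of $N'$ already lies in $N$,'' but $N' = \bigcap_w W_w^P(N)$ is an intersection of binomial submodules and need not itself be binomial, so it need not be generated by binomials. Even granting $\til_{N'} = \til_N$, an element of $N'$ with three or more monomial terms is not covered by your argument; you would need an additional reduction (e.g., homogeneity considerations in the $T/\til_N$-grading) to split a general element of $N'$ into binomials in non-nil degrees plus monomials in nil degrees. The paper's element-chasing proof again avoids this by never needing to describe generators of the intersection.
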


\begin{proof}
Pick an element $m \in M$ outside of $N$.  The goal is to find an essential witness $w$ and a monoid prime $P$ such that $m$ lies outside of the coprincipal component $W_w^P(N)$.  

First, suppose the image of $m$ lies outside of the localization $N_P$ along a maximal prime $P$ of $Q$.  Replacing $m$ with a monomial multiple of $m$, it suffices to assume that $\mm_P m \subset N$, that is to say, $m$ is annihilated (modulo $N$) by the maximal ideal $\mm_P$.  This means some monomial of $m$ has as its graded degree an essential witness $w$ for $P$.  By the minimality of $w$, the image of $m$ modulo $W_w^P(N)$ lies in the image of $M_w$ modulo $W_w^P(N)$, which is nonzero by Proposition~\ref{p:bincogen}.  

Next, suppose the image of $m$ under localization along some non-maximal monoid prime $P$ lies outside of $N_P$.  The above argument implies that the localized image of $m$ lies outside of some $P$-coprincipal component of $N_P$, which by Definition~\ref{d:bincogen} equals the localization $W_P$ of a $P$-coprincipal component $W$ of $N$.  Since localizing $W$ along $P$ is injective, this completes the proof.  
% 
% Let $N_1, \ldots, N_r$ denote the coprincipal components of $N$.  
% Since $N \subset N_i$ for each $i$, we have $N \subset \bigcap_i N_i$, 
% with equality if and only if the natural map $M/N \too \bigoplus_i M/N_i$ is injective.  
% Fix a nonzero $m_a \in M_a$, and consider its image $\ol m_a \in M/N$.  
% It suffices to find a coprincipal component $N_i$ so that the class of $m_a$ 
% in $M/N_i$ is exactly the same as $\ol m_a \in M/N$.  
% 
% Write $\ol T = T/\til_N$, and consider the ideal 
% $$I = \{q \in Q : \ol a \in \ol T \text{ is not singleton under } \ker(\ol T \stackrel{\cdot q}\too \ol T) \} \subset Q$$
% in $Q$.  There is a prime $P$ minimal among those containing $I$, and 
% no elements outside of $I_P$ join other elements to $\ol a$ in the localization $\ol T_P$.  
% This means $I_P$ is primary to the maximal ideal $P_P$, so 
% there is a maximal Green's class of elements in $Q_P$ outside of $I_P$.  
% Pick $q \in Q$ with image in this Green's class, so that $w = q \cdot a$ is a $\til_N$-witness for $P$
% and the class of $m_a$ in $M/N$ coincides with that in $W_{w}^P(N)$ by construction.  
% This completes the proof.  
\end{proof}

%%%%%%%%%%%%%%%%%%%%%%%%%%%%%%%%%%%%%%%%%%%%%%%%%%%%%%%
\section{Primary decomposition of binomial submodules}%
\label{s:binprimarydecomp}%%%%%%%%%%%%%%%%%%%%%%%%%%%%%
%%%%%%%%%%%%%%%%%%%%%%%%%%%%%%%%%%%%%%%%%%%%%%%%%%%%%%%

In this section, we extend the results of \cite[Section~15]{kmmeso} to construct a primary decomposition for any binomial submodule of a tightly graded $\kk[Q]$-module.  More specifically, the results presented in this section directly parallel those found in \cite[Proposition~15.1]{kmmeso}, \cite[Corollary~15.2]{kmmeso}, and \cite[Theorem~15.4]{kmmeso}, used to construct primary decompositions of mesoprimary binomial ideals over an algebraically closed field.  Corollary~\ref{c:primarydecomposition}, together with Theorem~\ref{t:binmesodecomp}, yield a combinatorial method of primarily decomposing a binomial submodule whenever $\kk = \ol\kk$ is algebraically closed.  

\begin{prop}\label{p:assocprimes}
Fix a tightly $T$-graded $\kk[Q]$-module $M$ and a mesoprimary binomial submodule $N \subset M$.  The associated primes of $N$ are precisely the associated primes of its unique associated mesoprime $I_{\rho,P}$.  In particular, $N$ is primary if and only if its associated mesoprime is prime.  
\end{prop}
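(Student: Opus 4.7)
The plan is to adapt \cite[Proposition~15.1]{kmmeso} to the module setting by showing each containment of the equality $\Ass(M/N) = \Ass(\kk[Q]/I_{\rho,P})$ separately. Write $\ol M = M/N$, which by hypothesis is a mesoprimary $\kk[Q]$-module with unique associated mesoprime $I_{\rho,P}$ (Theorem~\ref{t:binmesoprimary}).

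For the inclusion $\Ass(\ol M) \subseteq \Ass(\kk[Q]/I_{\rho,P})$, fix $\qq \in \Ass(\ol M)$ and choose $m \in \ol M$ with $\ann(m) = \qq$. Using the $T/\til_N$ grading on $\ol M$ provided by Lemma~\ref{l:binmodcong}, I would first reduce to the case that $m$ is a monomial in some non-nil degree $t \in T/\til_N$ (standard graded-prime filtration, using that $\kk[Q]$ itself is $Q$-graded). Theorem~\ref{t:qmodmesoprimary} guarantees the $P$-prime congruences coincide at every non-nil degree, and the mesoprimary hypothesis forces the characters at all monomial $N$-witnesses to agree with the single character $\rho$. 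Consequently, after multiplying $m$ by a suitable monomial $\mathbf{x}^q$ one lands on a monomial $N$-witness $m_w$, whose annihilator in $\ol M_P$ is $(I_{\rho,P})_P + \mm_P$ by Definition~\ref{d:assocmesoprime}; pulling back along this multiplication and tracking annihilators identifies $\kk[Q] \cdot m$ with a shifted quotient of $\kk[Q]/I_{\rho,P}$, so $\qq \in \Ass(\kk[Q]/I_{\rho,P})$.

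For the reverse inclusion, each essential monomial $N$-witness $m_w$ (which exists by Proposition~\ref{p:finiteessential}) generates a cyclic submodule of $\ol M$ isomorphic to a degree-shift of $\kk[Q]/I_{\rho,P}$, so every associated prime of $\kk[Q]/I_{\rho,P}$ appears in $\Ass(\ol M)$. The ``in particular'' statement then follows: $N$ is primary exactly when $\Ass(\ol M)$ is a singleton, which by the equality just established happens exactly when $I_{\rho,P}$ has a single associated prime; the character-based construction of mesoprimes in Definition~\ref{d:assocmesoprime} then shows that such an $I_{\rho,P}$ coincides with its unique associated prime, hence is prime itself.

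The main obstacle will be the first step: reducing an arbitrary $m \in \ol M$ to a monomial without losing control of the annihilator, then verifying that the character determined by $m$ agrees globally with $\rho$. Care is needed for monomials lying in the basin of a nil degree, where spurious annihilators may appear. Invoking Lemma~\ref{l:greensprimary} to bound the Green's structure of the primary quotient $T/F$, together with the partly cancellative property encoded in mesoprimary-ness (Definition~\ref{d:qmodprimary}), should handle the bookkeeping that identifies the stabilizer $K_w^P$ and character $\rho$ uniformly across all non-nil monomials of $\ol M$.
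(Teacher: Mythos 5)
Your overall strategy --- proving the two inclusions of $\Ass(M/N) = \Ass(\kk[Q]/I_{\rho,P})$ by chasing each associated prime to a monomial --- breaks down at its first step, and the failure is structural rather than bookkeeping. You propose to reduce an associated prime $\qq = \ann(m)$ to the annihilator of a monomial in some non-nil degree. But associated primes of mesoprimary quotients are typically \emph{not} annihilators of monomials: already for $N = 0$ in $M = \kk[x]/\<x^2-1\>$ over $\kk = \CC$ (with $Q = \NN$; this is a mesoprimary quotient whose unique associated mesoprime is $\<x^2-1\>$ itself), every monomial is a unit in $M$ and so has annihilator $\<x^2-1\>$, which is not prime, while the actual associated primes $\<x \mp 1\>$ are the annihilators of the \emph{binomials} $x \pm 1$. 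A ``standard graded-prime filtration'' in the fine $Q$-grading can only ever produce monomial primes, which are the wrong objects here. The next step compounds the problem: multiplying $m$ by $\mathbf{x}^q$ to land on a witness only enlarges the annihilator, so the identification of $\ann(\mathbf{x}^q m)$ cannot be pulled back to a statement about $\qq = \ann(m)$. The reverse inclusion also needs repair: the annihilator of a witness monomial equals $I_{\rho,P}$ only after localizing at $P$ and adding $\mm_P$, so for a witness that is not a cogenerator the cyclic submodule $\kk[Q]\cdot m_w$ is a quotient by some strictly larger $P$-mesoprimary ideal, not a degree-shift of $\kk[Q]/I_{\rho,P}$.

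The paper's proof avoids isolating individual associated primes altogether. Localizing at $P$ is injective on $M/N$ (monomials outside $\mm_P$ are nonzerodivisors on a $P$-mesoprimary quotient); by Theorems~\ref{t:binmesoprimary} and~\ref{t:qmodmesoprimary} the localization $(M/N)_P$ has finitely many nonzero graded pieces over $\ol T_P$ modulo its unit group, each isomorphic to $(\kk[Q]/I_{\rho,P})_P$; and the partial order supplied by Lemma~\ref{l:greensprimary} turns these pieces into a finite filtration of $(M/N)_P$ whose successive quotients are free of finite rank over $(\kk[Q]/I_{\rho,P})_P$. Both inclusions then follow simultaneously from the standard facts that $\Ass$ of a filtered module is contained in the union of the $\Ass$ of the successive quotients, and that the bottom nonzero layer, being a free $(\kk[Q]/I_{\rho,P})_P$-submodule, contributes all of $\Ass(\kk[Q]/I_{\rho,P})$. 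If you wish to keep your two-inclusion outline, you must replace ``reduce to a monomial'' by this filtration, or else invoke \cite[Proposition~15.1]{kmmeso} directly for the cyclic submodules generated by monomials, which are mesoprimary binomial quotients of $\kk[Q]$ sharing the associated mesoprime $I_{\rho,P}$.
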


\begin{proof}
Suppose $\ol T_P = T_P/\til_N$ has unit group $G$.  Notice that localizing along $P$ induces an injection $M/N \hookrightarrow (M/N)_P$ since the monomials outside of $\mm_P$ are nonzerodivisors on the quotient modulo any $P$-mesoprimary ideal.  Moreover, by Theorems~\ref{t:binmesoprimary} and~\ref{t:qmodmesoprimary}, $(M/N)_P$ has finitely many nonzero $(\ol T_P/G)$-graded pieces, all isomorphic to $(\kk[Q]/I_{\rho,P})_P$.  The partial order on $\ol T_P/G$ afforded by Lemma~\ref{l:greensprimary} induces a filtration of $(M/N)_P$ by $M_P$-submodules, each free of finite rank as a module over $(\kk[Q]/I_{\rho,P})_P$.  
\end{proof}

\begin{thm}\label{t:primarydecomposition}
Suppose $\kk = \ol\kk$ is algebraically closed.  Fix a tightly $T$-graded $\kk[Q]$-module $M$ and a mesoprimary binomial submodule $N \subset M$.  If $I_{\rho,P}$ is the unique associated mesoprime of $N$ and $I_{\rho,P} = \bigcap_\sigma I_{\sigma,P}$ is the unique primary decomposition of~$I_{\rho,P}$ from \cite[Proposition~11.9]{kmmeso}, then 
$$N = \textstyle\bigcap_\sigma (N + I_{\sigma,P}M)$$
is the unique minimal primary decomposition of $N$.  
\end{thm}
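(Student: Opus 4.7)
The plan is to follow the strategy of \cite[Theorem~15.4]{kmmeso}, replacing the role of the binomial ideal with our binomial submodule and invoking Proposition~\ref{p:assocprimes} to shift between ``primary'' and ``mesoprimary with prime associated mesoprime.''

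First I would verify that each $N + I_{\sigma,P}M$ is a mesoprimary binomial submodule of $M$ whose unique associated mesoprime is $I_{\sigma,P}$.  Writing $\ol M = M/N$, one has $M/(N + I_{\sigma,P}M) \iso \ol M/I_{\sigma,P}\ol M$, and since $\ol M$ is tightly graded by a mesoprimary $Q$-module whose $P$-prime congruence is determined by $I_{\rho,P} \subset I_{\sigma,P}$, quotienting by $I_{\sigma,P}\ol M$ only coarsens the $G_P$-orbit structure on the grading to the stabilizer attached to $\sigma$.  Mesoprimality is preserved, and the associated mesoprime of the quotient becomes $I_{\sigma,P}$.  Because $\kk = \ol\kk$, each $I_{\sigma,P}$ is prime (this is exactly the content of \cite[Proposition~11.9]{kmmeso}), so Proposition~\ref{p:assocprimes} implies $N + I_{\sigma,P}M$ is primary with associated prime $I_{\sigma,P}$.

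Next I would establish the equality $\bigcap_\sigma (N + I_{\sigma,P}M) = N$, which after quotienting by $N$ amounts to $\bigcap_\sigma I_{\sigma,P}\ol M = 0$ in $\ol M$.  By the proof of Proposition~\ref{p:assocprimes}, localization at $P$ embeds $\ol M \into \ol M_P$, so it suffices to prove $\bigcap_\sigma I_{\sigma,P}\ol M_P = 0$.  The same proposition provides a finite filtration of $\ol M_P$ whose successive quotients are free of finite rank over $R = (\kk[Q]/I_{\rho,P})_P$.  For any free $R$-module $R^k$, one has $\bigcap_\sigma I_{\sigma,P}R^k = (\bigcap_\sigma I_{\sigma,P})^k = 0$, since $I_{\rho,P} = \bigcap_\sigma I_{\sigma,P}$ acts as zero on $R$.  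A short induction along the filtration (using that each quotient surjection sends $\bigcap_\sigma I_{\sigma,P}\ol M_P$ into the corresponding intersection on each subquotient) then closes the argument.

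Minimality and uniqueness follow formally: the distinct components have distinct associated primes $I_{\sigma,P}$, so none is redundant, and any other minimal primary decomposition of $N$ would descend under the ``associated mesoprime'' operation to a minimal primary decomposition of $I_{\rho,P}$, which must agree with $\bigcap_\sigma I_{\sigma,P}$ by \cite[Proposition~11.9]{kmmeso}.  The main obstacle will be the second step, specifically checking that the filtration by free $R$-modules interacts cleanly with the intersection $\bigcap_\sigma I_{\sigma,P}$, and confirming in the first step that enlarging the mesoprime from $I_{\rho,P}$ to $I_{\sigma,P}$ really does turn the associated-mesoprime computation into $I_{\sigma,P}$ rather than producing some intermediate character-data on the stabilizer $K_w^P$.
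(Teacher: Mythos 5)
Your overall route is the same as the paper's, but the paper's proof is far terser: it asserts each $N + I_{\sigma,P}M$ is mesoprimary (hence primary by Proposition~\ref{p:assocprimes}) and then says the containment $\bigcap_\sigma (N + I_{\sigma,P}M) \subseteq N$ ``follows from the equality $N = N + I_{\rho,P}M$.'' You have correctly identified that this last clause hides the real content: $N = N + I_{\rho,P}M$ only gives $N + \bigl(\bigcap_\sigma I_{\sigma,P}\bigr)M = N$, whereas what is needed is $\bigcap_\sigma (N + I_{\sigma,P}M) \subseteq N$, and the intersection of the sums is a priori larger than the sum with the intersected ideal. Your plan --- pass to $\ol M = M/N$, embed into $\ol M_P$, and use the filtration from Proposition~\ref{p:assocprimes} with free subquotients over $(\kk[Q]/I_{\rho,P})_P$ --- is the right way to fill this in, and is what the corresponding argument in \cite[Theorem~15.4]{kmmeso} does for ideals.

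The one step that does not close as written is the induction along the filtration. From $x \in \bigcap_\sigma I_{\sigma,P}\ol M_P$ you get that $x$ maps to $\bigcap_\sigma I_{\sigma,P}(F_i/F_{i-1}) = 0$ in the top subquotient, hence $x \in F_{i-1}$; but this only places $x$ in $F_{i-1} \cap \bigcap_\sigma I_{\sigma,P}F_i$, not in $\bigcap_\sigma I_{\sigma,P}F_{i-1}$, so you cannot immediately recurse. The clean repair is to avoid the induction altogether: since $I_{\rho,P}$ annihilates $\ol M_P$, the module $\ol M_P$ is a module over $S = \kk[Q]_P/I_{\rho,P}$, and over $\kk = \ol\kk$ the decomposition $I_{\rho,P} = \bigcap_\sigma I_{\sigma,P}$ of \cite[Proposition~11.9]{kmmeso} realizes $S$ as a finite product $\prod_\sigma S/I_{\sigma,P}S$ with orthogonal idempotents $e_\sigma$; then $I_{\sigma,P}\ol M_P = \bigoplus_{\tau \neq \sigma} e_\tau \ol M_P$ and the intersection over $\sigma$ is visibly zero (when characteristic issues force a single primary, non-prime component there is only one $\sigma$ and nothing to intersect). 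Alternatively, keep your filtration but observe it is by $\ol T_P$-graded submodules and compute the intersection degreewise. Your first step (that the associated mesoprime of $N + I_{\sigma,P}M$ really is $I_{\sigma,P}$) and the minimality/uniqueness argument are fine and match what the paper leaves implicit.
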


\begin{proof}
Each submodule $N + I_{\sigma,P}M \subset M$ is binomial and mesoprimary, and thus primary by Proposition~\ref{p:assocprimes}.  The intersection $\textstyle\bigcap_\sigma (N + I_{\sigma,P}M)$ certainly contains $N$, and the converse follows from the equality $N = N + I_{\rho,P}M$.  
\end{proof}

\begin{cor}\label{c:primarydecomposition}
Suppose $\kk = \ol\kk$ is algebraically closed.  Every binomial submodule $N \subset M$ of a tightly $T$-graded $\kk[Q]$-module $M$ admits a primary decomposition in which each component is again binomial.  
\end{cor}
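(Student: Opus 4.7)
The plan is to combine the two main decomposition results established earlier in the paper. First, I would invoke Theorem~\ref{t:binmesodecomp} to write the given binomial submodule $N \subset M$ as a finite intersection
\[
N = \bigcap_{i} W_{w_i}^{P_i}(N),
\]
where the intersection runs over the essential witnesses of $N$ (finite by Proposition~\ref{p:finiteessential}), and each coprincipal component $W_{w_i}^{P_i}(N)$ is a mesoprimary binomial submodule of $M$ by Proposition~\ref{p:bincogen}.

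Second, I would apply Theorem~\ref{t:primarydecomposition} to each mesoprimary component $W_{w_i}^{P_i}(N)$ separately. Since $\kk$ is algebraically closed, this theorem provides a primary decomposition
\[
W_{w_i}^{P_i}(N) = \bigcap_{\sigma} \bigl(W_{w_i}^{P_i}(N) + I_{\sigma,P_i} M\bigr),
\]
where $I_{\rho_i,P_i} = \bigcap_\sigma I_{\sigma,P_i}$ is the canonical primary decomposition of the unique associated mesoprime. Each component $W_{w_i}^{P_i}(N) + I_{\sigma,P_i} M$ is binomial: the submodule $W_{w_i}^{P_i}(N)$ is binomial by construction (Definition~\ref{d:bincogen}), the mesoprime ideal $I_{\sigma,P_i}$ is generated by binomials, and the sum of a binomial submodule with a binomial ideal times $M$ is again generated by binomials and monomials in $M$.

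Assembling the two steps, I obtain
\[
N = \bigcap_{i} \bigcap_{\sigma} \bigl(W_{w_i}^{P_i}(N) + I_{\sigma,P_i} M\bigr),
\]
an expression of $N$ as a finite intersection of binomial primary submodules of $M$. The main point requiring care is not any single computation—each ingredient is already provided—but rather checking that the concatenation preserves the binomial property and yields a genuine primary decomposition, which follows because intersections of primary submodules whose indices are finite remain primary decompositions, and the binomial property is preserved under the specific construction $W_{w_i}^{P_i}(N) + I_{\sigma,P_i} M$. Thus the corollary reduces cleanly to Theorems~\ref{t:binmesodecomp} and~\ref{t:primarydecomposition}.
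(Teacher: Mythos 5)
Your proposal is correct and follows exactly the paper's argument: apply Theorem~\ref{t:binmesodecomp} to obtain the (finite, by Proposition~\ref{p:finiteessential}) mesoprimary decomposition of $N$ into coprincipal components, then apply Theorem~\ref{t:primarydecomposition} to each component. The extra verification you supply that each component $W_{w_i}^{P_i}(N) + I_{\sigma,P_i}M$ is binomial is a reasonable elaboration of what the paper leaves implicit, but the route is the same.
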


\begin{proof}
Apply Theorem~\ref{t:binmesodecomp} to construct a mesoprimary decomposition for $N$, then apply Theorem~\ref{t:primarydecomposition} to each mesoprimary component.  
\end{proof}

\begin{remark}\label{r:soccular}
In \cite{kmo}, mesoprimary decomposition is used to combinatorially construct irreducible decompositions of binomial ideals, using ``soccular decomposition'' as an intermediate step.  It remains an interesting question to extend soccular decomposition to tightly graded modules; we record this here.  
\end{remark}

\begin{prob}\label{pr:soccular}
Extend soccular decomposition \cite{kmo} to tightly graded modules.  
\end{prob}

%%%%%%%%%%%%%%%%%%%%%%%%%%%
\section{Acknowledgements}%
\label{s:acknowledgements}%
%%%%%%%%%%%%%%%%%%%%%%%%%%%

The author would like to thank Ezra Miller for his energy and patience during the author's time as a graduate student, and for motivating the work presented here.  The author would also like to thank Thomas Kahle and Laura Matusevich for their continued encouragement and for many useful discussions.  Much of this work was completed while the author was a graduate student, funded in part by Ezra Miller's NSF Grant DMS-1001437.  Some results from Sections~\ref{s:qmod}-\ref{s:qmodmesodecomp} also appear in~\cite{mesothesis}.

%%%%%%%%%%%%%%%%%%%%%%%%%%%%%%%%%%%%%%%%%%%%%%%%%%%%%%%%%%%%%%%%%%%%%%%%%
%%%%%%%%%%%%%%%%%%%%%%%%%%%%%%%%%%%%%%%%%%%%%%%%%%%%
%%%%%%%%%%%%%%%%%%%%%%%%%%%%%%%%%%%%%%%%%%%%%%%%%%%%%%%%%%%%%%%%%%%%%%%%%

%%%%%%%%%%%%%%%%%%%%%%%%%%%%%%%%%%%%%%%%%%%%%%%%%%%%%%%%%%%%%%%%%%%%%%%%%
\end{document}